\numberwithin{equation}{section}
\newcommand{\I}{\mathbb{I}} 
\newcommand{\E}{\mathbb{E}} 
\renewcommand{\P}{\mathbb{P}} 
\newcommand{\X}{\mathbf{X}} 
\newcommand{\Y}{\mathbf{Y}} 
\newcommand{\x}{\mathbf{x}} 
\newcommand{\n}{\mathbf{n}} 
\newcommand{\g}{\mathbf{g}} 
\renewcommand{\i}{\mathbf{i}} 
\theoremstyle{plain}
\newtheorem{theorem}{Theorem}[section]
\newtheorem{proposition}[theorem]{Proposition}
\newtheorem{lemma}[theorem]{Lemma}
\newtheorem{conjecture}[theorem]{Conjecture}
\theoremstyle{definition}
\newtheorem{remark}[theorem]{Remark}
\newcommand\R{\mathbb{R}}
\newcommand\Z{\mathbb{Z}}
\newcommand\N{\mathbb{N}}
\newcommand\C{\mathbb{C}}
\newcommand\eps{\varepsilon}
\begin{document}

\title[Chowla and Sarnak conjectures]{Equivalence of the logarithmically averaged Chowla and Sarnak conjectures}

\author{Terence Tao}
\address{Department of Mathematics, UCLA\\
405 Hilgard Ave\\
Los Angeles CA 90095\\
USA}
\email{tao@math.ucla.edu}

\begin{abstract}  Let $\lambda$ denote the Liouville function.  The Chowla conjecture asserts that
$$ \sum_{n \leq X} \lambda(a_1 n + b_1) \lambda(a_2 n+b_2) \dots \lambda(a_k n + b_k) = o_{X \to \infty}(X) $$
for any fixed natural numbers $a_1,a_2,\dots,a_k$ and non-negative integer $b_1,b_2,\dots,b_k$ with $a_ib_j-a_jb_i \neq 0$ for all $1 \leq i < j \leq k$, and any $X \geq 1$.  This conjecture is open for $k \geq 2$.  As is well known, this conjecture implies the conjecture of Sarnak that
$$ \sum_{n \leq X} \lambda(n) f(n) = o_{X \to \infty}(X)$$
whenever $f \colon \N \to \C$ is a fixed deterministic sequence and $X \geq 1$.  In this paper, we consider the weaker logarithmically averaged versions of these conjectures, namely that
$$ \sum_{X/\omega \leq n \leq X} \frac{\lambda(a_1 n + b_1) \lambda(a_2 n+b_2) \dots \lambda(a_k n + b_k)}{n} = o_{\omega \to \infty}(\log \omega) $$
and
$$ \sum_{X/\omega \leq n \leq X} \frac{\lambda(n) f(n)}{n} = o_{\omega \to \infty}(\log \omega)$$
under the same hypotheses on $a_1,\dots,a_k,b_1,\dots,b_k$ and $f$, and for any $2 \leq \omega \leq X$.  Our main result is that these latter two conjectures are logically equivalent to each other, as well as to the ``local Gowers uniformity'' of the Liouville function.  The main tools used here are the entropy decrement argument of the author used recently to establish the $k=2$ case of the logarithmically averaged Chowla conjecture, as well as the inverse conjecture for the Gowers norms, obtained by Green, Ziegler, and the author.
\end{abstract}

\maketitle


\section{Introduction}

Let $\lambda$ denote the Liouville function, thus $\lambda$ is the completely multiplicative function such that $\lambda(p)=-1$ for all primes $p$.  We have the following well known conjecture of Chowla \cite{chowla}:

\begin{conjecture}[Chowla conjecture]\label{chow}  Let $k \geq 1$, let $a_1,\dots,a_k$ be natural numbers and let $b_1,\dots,b_k$ be distinct nonnegative integers such that $a_i b_j - a_j b_i \neq 0$ for $1 \leq i < j \leq k$.  Then
$$ \sum_{n \leq X} \lambda(a_1 n + b_1) \dots \lambda(a_k n + b_k) = o_{X \to \infty}(X)$$
for all $X \geq 1$.  (See Section \ref{notation-sec} below for our asymptotic notation conventions.)
\end{conjecture}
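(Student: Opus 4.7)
This is a famous open problem for $k \geq 2$, so the best I can offer is a program rather than a proof. My starting point would be the logarithmically averaged version discussed later in this paper, which is more tractable than the natural average. For that version the plan is to run the entropy decrement argument (Tao's approach to the $k=2$ case): one expresses the logarithmic correlation on $[X/\omega,X]$ in a form that is invariant under the dilation $n \mapsto pn$ for small primes $p$ (using complete multiplicativity of $\lambda$), and uses an entropy bound on the joint distribution of the shifted tuple $(\lambda(a_i n + b_i))_{i=1}^k$ at varying scales to show that these correlations are essentially insensitive to short multiplicative perturbations. Combined with a Matom\"aki--Radziwill style short-interval cancellation input for $\lambda$, one hopes to reduce matters to showing that $\lambda$ has negligible Gowers $U^s$ norm when localized to a typical short interval $[x, x+H]$ with $H = H(X) \to \infty$ slowly.

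The next step would be to invoke the inverse conjecture for the Gowers norms (Green--Ziegler--Tao) to conclude that any obstruction to such local uniformity must come from a correlation of $\lambda$ on a short interval with a nilsequence of bounded complexity. One would then need a short-interval discorrelation estimate for $\lambda$ against nilsequences, extending the existing Green--Tao discorrelation of the M\"obius function from scale $X$ down to scales $H \ll X$. This is precisely the ``local Gowers uniformity'' input whose equivalence to the logarithmic Chowla conjecture is the main result of this paper; proving it unconditionally would complete the logarithmic case. At present it is known only for $s=1$ (prime number theorem in short intervals) and in a bilinear $s=2$ form, so already at $s=3$ substantial new analytic input on $\lambda$ in short intervals is required.

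The hardest step, and the one I genuinely do not see how to attack, is the removal of the logarithmic averaging to recover Conjecture~\ref{chow} itself. Matom\"aki--Radziwill type tools produce bounds of exactly the shape needed to control $\sum_n (\cdot)/n$, and the weight $1/n$ appears essential in those proofs; no general mechanism is known that converts a logarithmically averaged bound into a natural one for a potentially non-pretentious multiplicative correlation. Thus my plan stops at the logarithmic version, and I would expect a full proof of Chowla to require either a direct replacement for Matom\"aki--Radziwill that works without logarithmic weights, or a new structural understanding of how $\lambda$ behaves in short intervals that bypasses the need for entropy decrement entirely --- both of which appear to be well beyond what current techniques can deliver.
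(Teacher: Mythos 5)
This statement is the Chowla conjecture itself, which the paper does not prove (and which remains open for $k \geq 2$); the paper only establishes equivalences among logarithmically averaged variants. You correctly recognize this, and your program faithfully tracks the paper's own reduction scheme (logarithmic Chowla from local Gowers uniformity via the entropy decrement argument, and local Gowers uniformity from nilsequence discorrelation via the inverse conjecture), including the honest identification of the two genuine obstacles: proving local Gowers uniformity for $d \geq 2$, and removing the logarithmic averaging. Nothing to correct beyond a minor indexing point: by Remark \ref{kch} the first open case $k=3$ already reduces to local $U^2$ uniformity, so the new analytic input is needed starting at $d=2$, not $d=3$.
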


Note that the bound of $o_{X \to \infty}(X)$ improves slightly over the trivial bound of $O(X)$.  The conjectures discussed later in this introduction will also similarly claim a slight improvement (of ``little-$o$'' type) over the corresponding trivial bound.

The $k=1$ case of the Chowla conjecture is equivalent to the prime number theorem.  The higher $k$ cases are open, although there are a number of partial results available if one allows for some averaging in the $b_1,\dots,b_k$ parameters, or if one wishes to obtain an upper bound in magnitude of the form $(1-\eps+o(1))X$ rather than $o(X)$; see \cite{mr}, \cite{mrt}, \cite{FH}, \cite{ey} for some recent results in this direction.  A routine application of the identity $\mu(n) = \sum_{d^2|n} \mu(d) \lambda(\frac{n}{d^2})$ (or the inverse identity $\lambda(n) = \sum_{d^2|n} \mu(\frac{n}{d^2})$) allows one to replace the Liouville function $\lambda$ in Conjecture \ref{chow} by the M\"obius function $\mu$ if desired; see e.g. \cite[\S 6]{gt-mobius} for a closely related argument.  See also \cite{hil}, \cite{hpw}, \cite{mrt-2} for some results on the related topic of sign patterns for the Liouville function.

In \cite{sarnak}, \cite{sarnak-2}, Sarnak introduced the following related conjecture.
Recall that a \emph{topological dynamical system} $(Y,T)$ is a compact metric space $Y$ with a homeomorphism $T \colon Y \to Y$, and the \emph{topological entropy} $h(Y,T)$ of such a system is defined as 
$$ h(Y,T) := \lim_{\eps \to 0} \limsup_{n \to \infty} \frac{1}{n} \log N(\eps,n)$$
where $N(\eps,n)$ is the largest number of $\eps$-separated points in $Y$ using the metric $d_n:Y \times Y \to \R^+$ defined by
$$ d_n(x,y) := \max_{0 \leq i \leq n} d(T^i x, T^i y).$$
A sequence $f\colon \Z \to \C$ is said to be \emph{deterministic} if it is of the form
$$ f(n) = F(T^n x_0)$$
for all $n$ and some topological dynamical system $(Y,T)$ of zero topological entropy $h(Y,T)=0$, a base point $x_0 \in Y$, and a continuous function $F\colon Y \to \C$.

\begin{conjecture}[Sarnak conjecture]\label{sar}  Let $f\colon \N \to \C$ be a deterministic sequence.  Then
$$ \sum_{n \leq X} \lambda(n) f(n) = o_{X \to \infty}(X)$$
for all $X \geq 1$.
\end{conjecture}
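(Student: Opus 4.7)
My plan is to deduce Conjecture~\ref{sar} from Conjecture~\ref{chow} via Furstenberg joinings, the argument the abstract refers to as ``well known'' (no unconditional proof being available). First, embed $\lambda$ as a point of the full shift $(\{-1,+1\}^{\Z},\sigma)$ and consider the empirical measures $\nu_X:=\frac{1}{X}\sum_{n\le X}\delta_{\sigma^n\lambda}$. Chowla's conjecture, applied with $a_1=\cdots=a_k=1$ and distinct shifts $b_1,\dots,b_k$, yields
$$\frac{1}{X}\sum_{n\le X}\lambda(n+b_1)\cdots\lambda(n+b_k)\;=\;o_{X\to\infty}(1)$$
for every $k\ge1$. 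These are precisely the joint moments of the Bernoulli $(\tfrac12,\tfrac12)$ measure $\mu_B$ on $\{-1,+1\}^{\Z}$ (whose coordinate functions are i.i.d.\ symmetric signs). Consequently every weak-$*$ limit of $\nu_X$ agrees with $\mu_B$ on coordinate monomials, hence equals $\mu_B$; in other words, $\lambda$ is generic for $\mu_B$.

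Next, given a deterministic $f(n)=F(T^n x_0)$ with $(Y,T)$ of zero topological entropy, I would pass to a subsequence $X_j\to\infty$ along which the diagonal empirical measures $\frac{1}{X_j}\sum_{n\le X_j}\delta_{(\sigma^n\lambda,\,T^n x_0)}$ on $\{-1,+1\}^{\Z}\times Y$ converge weakly to some $\eta$. By the previous paragraph and compactness, $\eta$ is a joining of $\mu_B$ with some $T$-invariant measure $\rho$ on $Y$. The Bernoulli shift has completely positive Kolmogorov--Sinai entropy (it is a $K$-system), while the variational principle gives $h_\rho(T)\le h_{\mathrm{top}}(Y,T)=0$. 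A classical theorem of Rokhlin states that every $K$-system is disjoint from every zero-entropy system, forcing $\eta=\mu_B\otimes\rho$. Testing $\eta$ against $(\omega,y)\mapsto\omega_0F(y)$ then yields
$$\frac{1}{X_j}\sum_{n\le X_j}\lambda(n)f(n)\;\longrightarrow\;\Big(\int\omega_0\,d\mu_B\Big)\Big(\int F\,d\rho\Big)\;=\;0,$$
and a standard subsequence argument upgrades this to a full limit, yielding Conjecture~\ref{sar}.

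The main obstacle is of course the first step: showing $\lambda$ is generic for $\mu_B$ is \emph{exactly} the full Chowla conjecture, which is open for every $k\ge2$. Everything else---the joinings lifting, the variational principle, and Rokhlin's disjointness theorem---is soft ergodic theory. This is presumably why the present paper restricts attention to \emph{logarithmic} averages: in that setting the analogue of the genericity step becomes tractable via the author's entropy decrement method, so that a genuine equivalence of conjectures (rather than a conditional implication in a single direction) can be established.
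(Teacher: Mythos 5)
The statement you were asked about is Conjecture~\ref{sar} itself, which the paper does not prove (it is open); the only thing available, and the only thing you actually deliver, is the well-known \emph{conditional} implication from Conjecture~\ref{chow}. You are candid about this, and your conditional argument is essentially correct: Chowla for $a_1=\cdots=a_k=1$ determines all coordinate monomial moments of the weak-$*$ limits of the empirical measures (repeated indices disappear since $\lambda^2=1$), Stone--Weierstrass then forces genericity for the Bernoulli measure $\mu_B$, the variational principle bounds the entropy of the second marginal by $h_{\mathrm{top}}(Y,T)=0$, and disjointness of completely-positive-entropy systems from zero-entropy systems gives the product joining. Two small points: the disjointness theorem you invoke is due to Furstenberg (1967), not Rokhlin (though its proof rests on the Rokhlin--Sinai description of the Pinsker factor); and you should say a word about extending $\lambda$ to $\Z$ (or working in $\{-1,+1\}^{\N}$) before forming $\sigma^n\lambda$.

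Your route is genuinely different from the one the paper uses for the logarithmic analogue of this implication in Section~\ref{chow-sec}. There, zero \emph{topological} entropy is used directly and quantitatively: for each $\eps$ the length-$H$ orbit segments $(F(T^{h}x))_{h\le H}$ are covered by $O(\exp(\eps^3 H))$ model sequences, and for each fixed model one bounds $\P(|\sum_{h\le H}\lambda(\n+h)F(T^hx_i)|>\eps H)$ by expanding a $2k$-th moment, applying Chowla to the off-diagonal terms, and finishing with Chebyshev and a union bound over the $\exp(\eps^3 H)$ models. That argument is effective, uses only finitely many instances of Chowla for each fixed $\eps$, and survives logarithmic averaging (it only needs the approximate translation invariance \eqref{ati}); your joining/disjointness argument is softer and cleaner but purely qualitative, requires the full Ces\`aro averages to form invariant limit measures, and passes through measure-theoretic entropy via the variational principle rather than through the covering-number definition. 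Both are valid derivations of Chowla~$\Rightarrow$~Sarnak; neither, of course, proves Conjecture~\ref{sar} unconditionally.
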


Both Conjecture \ref{chow} and Conjecture \ref{sar} can be viewed as instances of the ``M\"obius pseudorandomness principle'' (see e.g. \cite[\S 13]{ik}).  In \cite{sarnak} it was observed that Conjecture \ref{sar} was implied by Conjecture \ref{chow}; see \cite{tao-sarnak}, \cite{ekld} for some proofs of this implication.  The Sarnak conjecture has been verified for many particular instances of zero entropy topological dynamical systems \cite{borg-1}, \cite{borg-2}, \cite{bst}, \cite{dart}, \cite{davenport}, \cite{ddm}, \cite{dk}, \cite{drm}, \cite{ekl}, \cite{ekld}, \cite{elr}, \cite{elr-2}, \cite{fklm}, \cite{fm}, \cite{green}, \cite{gt-mobius}, \cite{hanna}, \cite{kara}, \cite{kl}, \cite{ls}, \cite{mriv}, \cite{mriv-2}, \cite{mullner}, \cite{peck}, \cite{sarnak-ubis}, \cite{veech}; for further variants of the Sarnak conjecture, see \cite{eis}, \cite{ekld}, \cite{huang}.

Recently in \cite{tao-chowla}, we introduced the following logarithmically averaged version of Conjecture \ref{chow}:

\begin{conjecture}[Logarithmically averaged Chowla conjecture]\label{chow-log}  Let $k \geq 1$, let $a_1,\dots,a_k$ be natural numbers and let $b_1,\dots,b_k$ be distinct nonnegative integers such that $a_i b_j - a_j b_i \neq 0$ for $1 \leq i < j \leq k$.  Then one has
\begin{equation}\label{non}
\sum_{X/\omega \leq n \leq X} \frac{\lambda(a_1 n + b_1) \dots \lambda(a_k n + b_k)}{n} = o_{\omega \to \infty}(\log \omega) 
\end{equation}
for all $2 \leq \omega \leq X$.
\end{conjecture}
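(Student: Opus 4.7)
The plan is to attack Conjecture \ref{chow-log} by reducing it, via a multilinear entropy-decrement argument, to a ``local Gowers uniformity'' statement for $\lambda$ on typical short intervals, and then to dispose of the latter via the inverse theorem for the Gowers norms combined with a Sarnak-type input on nilsequences. As a first step, I would pass from the logarithmic sum in \eqref{non} to a weighted average of short-interval $k$-point correlations
\[
\sum_{X/\omega \leq n \leq X} \frac{1}{n H}\sum_{1 \leq h \leq H} \lambda(a_1(n+h)+b_1) \cdots \lambda(a_k(n+h)+b_k),
\]
with $H = H(\omega) \to \infty$ slowly, which is permissible by a telescoping identity together with the logarithmic weighting.

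Next I would adapt the entropy decrement argument used in the $k=2$ case of the logarithmically averaged Chowla conjecture to multilinear correlations. The goal is to show that if the above short-interval averages are not $o(1)$ on average over $n$, then $\lambda$ must have large $U^{k-1}$-norm on a positive-density family of short intervals $[x,x+H]$ (with $x$ sampled logarithmically in $[1,X]$). The role of the entropy decrement step is to insert a multiplicative dilation by a prime $p$, using complete multiplicativity of $\lambda$, without significantly disturbing the correlation; this is what converts a pointwise correlation of $\lambda$ at $k$ linear forms into a Gowers-norm-type quantity.

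I would then apply the Green--Tao--Ziegler inverse theorem for the $U^{k-1}$-norm to extract, from the large-Gowers-norm condition, a correlation of $\lambda$ with a bounded-complexity nilsequence $F(g(n)\Gamma)$ of step $k-2$ on each of these short intervals. Since nilsequences are deterministic (zero topological entropy), a logarithmically averaged Sarnak-type bound, applied uniformly across the family of short intervals, should force this correlation to vanish on average, yielding the desired contradiction and hence \eqref{non}.

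The main obstacle is the entropy decrement step at the multilinear level: in the bilinear case one only needs to control a $U^2$-type quantity, which is accessible via classical Fourier analysis, whereas passing to general $k$ requires the higher-order inverse theorem and a delicate verification that the by-product of the entropy argument genuinely is a $U^{k-1}$-norm on short intervals, rather than some weaker seminorm invisible to the inverse theorem. A secondary but still substantial difficulty is establishing the logarithmically averaged Sarnak conjecture for polynomial nilsequences of arbitrary step, uniformly over the short-interval family: for step one (linear phases) this is classical via Vinogradov, but higher steps require quantitative equidistribution on nilmanifolds in a form compatible with the short-interval setup above.
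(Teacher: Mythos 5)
The statement you are trying to prove is a \emph{conjecture} in the paper, not a theorem: the paper establishes Conjecture \ref{chow-log} unconditionally only for $k \leq 2$ (the $k=1$ case being the prime number theorem and the $k=2$ case being the main result of \cite{tao-chowla}), and for general $k$ it proves only that Conjecture \ref{chow-log} is \emph{equivalent} to Conjectures \ref{chow-log-2}, \ref{sar-log}, \ref{lgi-log} and \ref{lnc-log}. Your proposal reconstructs the paper's implication chain quite faithfully --- the entropy decrement argument with prime dilations reducing \eqref{non} to local $U^{k-1}$ uniformity of $\lambda$ on short intervals (Section \ref{entropy-dec} and Remark \ref{kch}), the inverse theorem of Green--Tao--Ziegler reducing that to correlation with nilsequences (Section \ref{gow-sec}), and a Sarnak-type bound to kill the nilsequence correlation (Section \ref{detsec} runs this last step in the reverse direction, from Conjecture \ref{sar-log} to Conjecture \ref{lnc-log}). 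But this chain is a closed loop of conditional implications, not a proof: the terminal input you invoke, namely a logarithmically averaged bound on $\sup_{g \in G} |\sum_{h=1}^H \lambda(n+h) F(g^h x_0)|$ for typical $n$ with $H$ growing arbitrarily slowly, \emph{is} Conjecture \ref{lnc-log}, which is open for every $s \geq 1$. The known results (\cite{gt-mobius}, \cite{zhan}) require $H \geq X$ or $H \geq X^{5/8+\eps}$, and \cite{mrt} only handles $s=1$ with the supremum outside the sum over $n$; none of these cover the regime your argument needs.

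A second, more specific error: you assert that the step-one (linear phase) case of the required nilsequence bound ``is classical via Vinogradov.'' It is not. Vinogradov-type estimates give $\sum_{n \leq X} \lambda(n) e(n\alpha) = o(X)$ over the full range, whereas your argument needs $\sup_{\alpha} |\sum_{h=1}^H \lambda(n+h) e(h\alpha)| = o(H)$ for almost all $n$ (logarithmically weighted) with $H$ slowly growing and the supremum inside the average over $n$. As the paper points out in the remark following Section \ref{gow-sec}, this single-phase, short-interval, sup-inside estimate already suffices for the first open case $k=3$ of Conjecture \ref{chow-log} and was open at the time of writing. So even the ``easy'' endpoint of your plan is a hard open problem, and your proposal, while architecturally sound as a proof of the \emph{equivalence} (which is what the paper actually proves), does not prove Conjecture \ref{chow-log}.

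Two smaller technical remarks on the middle step: the entropy decrement argument by itself does not produce a Gowers norm. After decoupling $\X_H$ from $\Y_H$ one is left with a correlation weighted by the von Mangoldt function over the dilation parameter; one must then run the $W$-trick, eliminate the weight $\Lambda_{W,r}$ using the Green--Tao results on linear equations in primes (the localised estimate \cite[(A.9)]{fgkt}), and only then apply the generalised von Neumann inequality to surface the $U^{k-1}$ norm. Your sketch omits the linear-equations-in-primes input entirely, and for more general multiplicative functions in place of $\lambda$ this is exactly where the argument currently breaks down, as the paper notes.
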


We bound $\omega$ from below by $2$ rather than $1$ to avoid the minor inconvenience of $\log \omega$ vanishing.  A standard averaging argument shows that Conjecture \ref{chow} implies Conjecture \ref{chow-log} for any fixed choice of $k$.  Conversely, if we could prove Conjecture \ref{chow-log} for $\omega>1$ fixed and an error term of $o_{X \to \infty}(1)$ instead of $o_{\omega \to \infty}(\log \omega)$, one could establish Conjecture \ref{chow} by a summation by parts argument.  We leave the details of these (routine) arguments to the interested reader.

By introducing the \emph{entropy decrement argument}, we were able to establish the $k=2$ case of Conjecture \ref{chow-log} in \cite{tao-chowla}; using this result (or more precisely, a generalisation of this result in which $\lambda$ is replaced by a more general bounded completely multiplicative function, in the spirit of the Elliott conjecture \cite{elliott}), we were able to affirmatively settle the Erd\H{o}s discrepancy problem \cite{tao-erd}.

One can of course restrict this conjecture to the model case $a_1=\dots=a_k=1$:

\begin{conjecture}[Logarithmically averaged Chowla conjecture, special case]\label{chow-log-2}  Let $k \geq 1$, and let $h_1 < \dots < h_k$ be distinct nonnegative integers.  Then
\begin{equation}\label{non-2}
 \sum_{X/\omega \leq n \leq X} \frac{\lambda(n + h_1) \dots \lambda(n + h_k)}{n} = o_{\omega \to \infty}(\log \omega) 
\end{equation}
for all $2 \leq \omega \leq X$.
\end{conjecture}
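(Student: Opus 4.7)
The plan is to adapt the strategy that handled the $k=2$ case in \cite{tao-chowla}, combining the entropy decrement argument with the inverse theory of the Gowers norms to reach general $k$. Since Conjecture \ref{chow-log-2} is merely the $a_1=\dots=a_k=1$ instance of Conjecture \ref{chow-log}, a routine reduction (inserting indicators of residue classes modulo $a_1 \cdots a_k$ and renormalising) should let the special case imply the general one, so I would treat \eqref{non-2} as the essential target.

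First, I would fix $k \geq 2$ and shifts $h_1 < \dots < h_k$ and contradict the conclusion, assuming $\bigl| \sum_{X/\omega \leq n \leq X} \frac{\lambda(n+h_1)\cdots \lambda(n+h_k)}{n} \bigr| \gg \log\omega$ along a subsequence of $X,\omega \to \infty$. The entropy decrement argument then enters as follows: the change of variables $n \mapsto pn$, combined with the complete multiplicativity identity $\lambda(pn) = -\lambda(n)$, introduces a free translation by $h_i(p-1)$-type parameters inside the correlation at the cost of a genuinely logarithmic weight, and such manipulations are permitted precisely because log-averaging is essentially invariant under dilations. The output I would seek is that the original log-sum is approximated by a short-interval average of $\lambda(n+h_1)\cdots\lambda(n+h_k)$ further averaged over a small scale of additive shifts.

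Second, I would estimate this smoothed correlation by a \emph{local} $U^{k}$-Gowers norm of $\lambda$ on intervals of length $\ll X/\omega$ (or a similar dyadic scale). The inverse conjecture for the Gowers norms, as developed by Green, Ziegler, and the author, then converts a large such norm into correlation of $\lambda$ with a nilsequence on a short interval. Since nilsequences are deterministic in the sense of Sarnak, the bounds of Green--Tao for $\lambda$ (equivalently $\mu$) against nilsequences rule out any such correlation, yielding the desired contradiction.

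The main obstacle is not the harmonic analytic machinery, which is already in place, but the quantitative matching between the scales produced by the entropy decrement and the scales on which the inverse Gowers theorem and the nilsequence estimates of Green--Tao are effective. For $k=2$ the $U^2$ analysis is soft enough that the cruder entropy loss can be absorbed; for $k \geq 3$ the inverse $U^k$ theorem is highly non-quantitative, and bridging it to an \emph{unconditional} local Gowers uniformity statement for $\lambda$ on short intervals is the obstruction that the present paper circumvents by proving an equivalence rather than the conjecture itself. A full proof of Conjecture \ref{chow-log-2} for $k \geq 3$ along these lines therefore seems to hinge on quantitative improvements to either the entropy decrement or the inverse theory that are not currently available.
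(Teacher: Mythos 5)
The statement you were asked to prove is a \emph{conjecture}, not a theorem of the paper: the paper establishes only that Conjecture \ref{chow-log-2} is \emph{equivalent} to Conjectures \ref{chow-log}, \ref{sar-log}, \ref{lgi-log} and \ref{lnc-log}, and the cases $k \geq 3$ remain open (only $k=1$, via the prime number theorem, and $k=2$, via \cite{tao-chowla}, are known). Your proposal correctly recognises this in its final paragraph, and the conditional chain you sketch --- entropy decrement to pass from the correlation to a local Gowers norm, then the inverse theorem of \cite{gtz} to pass from a large local Gowers norm to correlation with a nilsequence on a short interval --- is essentially the paper's route through Sections \ref{entropy-dec} and \ref{gow-sec} (with the minor correction that the relevant norm is $U^{k-1}$, not $U^k$; see Remark \ref{kch}).

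The genuine gap, which your second step papers over before your last paragraph retracts it, is the assertion that ``the bounds of Green--Tao for $\lambda$ against nilsequences rule out any such correlation.'' The Green--Tao orthogonality theorem \cite{gt-mobius} applies to sums over an interval of length comparable to the full range, i.e.\ the regime $H \geq X$ in the notation of Conjecture \ref{lnc-log}; what the argument actually requires is the \emph{local} statement \eqref{lognil}, with $H \leq \omega \leq X$ and the supremum over $g$ inside the sum over $n$, which is open for every $s \geq 1$ (only the $s=0$ case follows from Matom\"aki--Radziwi{\l}{\l} \cite{mr}, and partial results such as \cite{zhan} reach only $H \geq X^{5/8+\eps}$ for $s=1$). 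So the contradiction you aim for in step two cannot be derived from any known unconditional input, and your proposal does not constitute a proof --- consistent with the fact that no proof exists. As a secondary point, the reduction of the general Conjecture \ref{chow-log} to the special case \ref{chow-log-2} is not the ``routine'' insertion of residue classes you describe: in the paper that implication is obtained only by traversing the entire cycle of equivalences (Chowla special case $\Rightarrow$ Sarnak $\Rightarrow$ nilsequences $\Rightarrow$ Gowers uniformity $\Rightarrow$ general Chowla).
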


We also have a logarithmically averaged version of the Sarnak conjecture:

\begin{conjecture}[Logarithmically averaged Sarnak conjecture]\label{sar-log}  Let $f\colon \N \to \C$ be a deterministic sequence.
Then
\begin{equation}\label{logsar}
 \sum_{X/\omega \leq n \leq X} \frac{\lambda(n) f(n)}{n} = o_{\omega \to \infty}(\log \omega)
\end{equation}
for all $2 \leq \omega \leq X$.
\end{conjecture}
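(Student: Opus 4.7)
The plan is to derive Conjecture~\ref{sar-log} from Conjecture~\ref{chow-log}, using ``local Gowers uniformity'' of $\lambda$ as an intermediate bridge. Given a deterministic sequence $f(n) = F(T^n x_0)$ on a zero-entropy system $(Y,T)$, my first step is to approximate $f$ on each dyadic range $[X/\omega, X]$ by a structured function. Concretely, for every $\eps > 0$ I would seek a decomposition $f = g + h$, where $g$ is essentially a nilsequence of bounded complexity (a function of the form $G(n\Gamma)$ for a polynomial orbit on a nilmanifold of bounded dimension and step) and $h$ has small local $U^s$ norm, in the sense that $\|h\|_{U^s[n, n+H]} \le \eps$ for most $n$ at a window size $H = H(\omega) \to \infty$. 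The zero topological entropy of $(Y,T)$ is what should make such a decomposition possible with the complexity of $g$ uniform across scales: morally, it says the orbit of $x_0$ is predictable enough to be modelled by nilpotent dynamics, up to a Gowers-small error.

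For the nilsequence piece $g$, I would invoke the theorem of Green, Tao, and Ziegler that $\sum_{n \le X} \lambda(n) g(n) = o(X)$ holds for every nilsequence $g$ of bounded complexity, and then convert to the logarithmic average \eqref{logsar} by summation by parts. This handles the structured component without any use of Chowla.

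For the Gowers-small piece $h$, the idea is to argue by contradiction via the inverse conjecture for the Gowers norms: if $\sum_{X/\omega \le n \le X} \lambda(n) h(n)/n$ were not $o_{\omega \to \infty}(\log \omega)$, then a dyadic pigeonhole together with the inverse $U^s$ theorem would force $\lambda$ itself to correlate locally with a nilsequence on a positive proportion of short intervals. I would rule this out by showing that Conjecture~\ref{chow-log} forces $\lambda$ to be locally Gowers uniform: the $U^s$ norm of $\lambda$ on $[n, n+H]$ expands into $2^s$-fold linear correlations of $\lambda$ with bounded shifts, and averaging over $n$ with the logarithmic weight $1/n$ reduces this to a sum of logarithmic Chowla averages, each of which is $o_{\omega \to \infty}(\log \omega)$ by hypothesis.

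The main obstacle I anticipate is the localization step. Conjecture~\ref{chow-log} provides control of correlations of $\lambda$ only on the long logarithmic scale $[X/\omega, X]$, whereas the inverse Gowers conjecture demands Gowers uniformity on the much shorter scale $H$. Bridging this gap is the essential role of the entropy decrement argument: it should show that the logarithmic Chowla vanishing at a global scale forces the same vanishing at a typical local scale, since otherwise the Kolmogorov--Sinai entropy of an auxiliary shift system built out of $\lambda$ and the nilcharacter would be forced to decrease indefinitely. Making this localization precise, with parameters uniform enough to be fed into the inverse Gowers conjecture, is where I expect the technical heart of the argument to lie.
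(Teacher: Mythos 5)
Your proposal does not follow the paper's route, and it contains genuine gaps that I do not see how to repair. The first and most serious is the opening decomposition $f=g+h$ with $g$ a nilsequence of bounded complexity and $h$ locally Gowers-uniform. Zero topological entropy of $(Y,T)$ does not supply any such structure: it is a purely combinatorial statement that the number of $\delta$-distinguishable length-$H$ orbit segments grows subexponentially in $H$, and it is compatible with systems (weakly mixing zero-entropy systems, interval exchanges, substitution systems) whose orbits are nowhere near nilsequences. The Green--Tao arithmetic regularity lemma does give a decomposition of an \emph{arbitrary} bounded sequence into nilsequence $+$ $L^2$-small $+$ Gowers-uniform pieces, but precisely because it applies to arbitrary sequences it cannot by itself prove a statement that is false for arbitrary sequences (take $f=\lambda$). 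The second gap is in your treatment of the piece $h$: a bilinear correlation $\sum_n \lambda(n)h(n)/n$ is a plain inner product and is \emph{not} controlled by the Gowers norms of either factor, so neither the local Gowers uniformity of $\lambda$ (Conjecture \ref{lgi-log}) nor the smallness of $\|h\|_{U^s}$ rules out $\lambda$ and $h$ being highly correlated --- e.g.\ $h$ could be a Gowers-uniform sequence that happens to coincide with $\lambda$. The generalized von Neumann inequality controls multilinear forms with several independently shifted factors, not a single inner product; this is exactly why the paper needs the full inverse conjecture machinery only for the implications \ref{lnc-log} $\Rightarrow$ \ref{lgi-log} $\Rightarrow$ \ref{chow-log}, and not here. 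Relatedly, the entropy decrement argument plays no role in Chowla $\Rightarrow$ Sarnak; in the paper it appears only in deducing Conjecture \ref{chow-log} from Conjecture \ref{lgi-log}, and Section \ref{chow-sec} uses nothing beyond translation invariance \eqref{ati}.

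The idea you are missing is the classical one from \cite{sarnak}, \cite{tao-sarnak}, which the paper adapts in Section \ref{chow-sec}. Use translation invariance to replace $\E\,\lambda(\n)F(T^{\n}x_0)$ by the averaged short sum $\E \frac{1}{H}\sum_{h=1}^{H}\lambda(\n+h)F(T^{\n+h}x_0)$. Zero entropy is then used exactly for its covering content: the segment $(F(T^{\n+h}x_0))_{1\le h\le H}$ lies within $\eps$ of one of only $O(\exp(\eps^3 H))$ deterministic templates $(F(T^h x_i))_{1\le h\le H}$. For each fixed template one bounds $\P(|\sum_{h\le H}\lambda(\n+h)F(T^hx_i)|>\eps H)$ by the $2k$-th moment; expanding the moment produces correlations $\E\,\lambda(\n+h_1)\cdots\lambda(\n+h_{2k})$, which are $o_{\omega\to\infty}(1)$ by Conjecture \ref{chow-log-2} except on the diagonal terms, and choosing $k$ a small multiple of $\eps^2 H$ yields an exceptional probability $O(\exp(-c\eps^2 H))$ that beats the $\exp(\eps^3 H)$ covering count in the union bound. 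The only part of your sketch that survives is the observation that expanding the local $U^d$ norm of $\lambda$ reduces to logarithmic Chowla averages; that is indeed how the paper deduces Conjecture \ref{lgi-log} from Conjecture \ref{chow-log-2} for slowly growing $H$, but it is a different implication from the one at hand.
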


We introduce two further conjectures which will be relevant in the proof of our main theorem.  Recall that for any finitely supported function $f\colon \Z \to \C$ and any $d \geq 1$, the \emph{Gowers uniformity norm} $\|f\|_{U^d(\Z)}$, first introduced in \cite{gowers-4}, \cite{gowers}, is defined by the formula
$$\|f\|_{U^d(\Z)} := \left( \sum_{x,h_1,\dots,h_d \in \Z} \prod_{\vec \omega \in \{0,1\}^d} {\mathcal C}^{|\vec \omega|} f(x+\omega_1 h_1 + \dots + \omega_d h_d) \right)^{1/2^d},$$ 
where $\vec \omega = (\omega_1,\dots,\omega_d)$, $|\vec \omega| := \omega_1+\dots+\omega_d$, and ${\mathcal C}\colon z \mapsto \overline{z}$ is the complex conjugation operator.  One can verify that $\|f\|_{U^d(\Z)}$ is well-defined as a non-negative real.  Given a non-empty discrete interval $I$ in the integers $\Z$, we define the local Gowers norm $\|f\|_{U^d(I)}$ by the formula
$$ \|f\|_{U^d(I)} := \| f 1_I \|_{U^d(\Z)} / \| 1_I \|_{U^d(\Z)}$$
where $1_I$ is the indicator function of $I$.  We then form the following conjecture:

\begin{conjecture}[Logarithmically averaged local Gowers uniformity of Liouville]\label{lgi-log}  Let $d \geq 1$.  Then one has
\begin{equation}\label{loggow}
 \sum_{X/\omega \leq n \leq X} \frac{\| \lambda \|_{U^d([n,n+H] \cap \Z)}}{n} = o_{H \to \infty}(\log \omega)
\end{equation}
for all $2 \leq H \leq \omega \leq X$.
\end{conjecture}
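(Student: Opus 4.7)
My plan is to deduce Conjecture \ref{lgi-log} from Conjecture \ref{chow-log-2} (the special case of log-averaged Chowla) by direct expansion of the local Gowers norm into $2^d$-fold Liouville correlations. This would establish one direction of the equivalence advertised in the abstract, namely Chowla $\Rightarrow$ local Gowers uniformity; the reverse direction would presumably go through the Green--Ziegler--Tao inverse theorem combined with the equivalence between Chowla and Sarnak.

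After the substitution $x = n + y$ one has the identity
$$\|\lambda 1_{[n,n+H]}\|_{U^d(\Z)}^{2^d} = \sum_{y, h_1,\ldots,h_d} \prod_{\vec\omega \in \{0,1\}^d} \mathcal{C}^{|\vec\omega|}\lambda\bigl(n + y + \omega_1 h_1 + \cdots + \omega_d h_d\bigr),$$
summed over $(y,\vec h)$ with all $2^d$ cube vertices lying in $[0,H]$. By Hölder applied to the log-average over $n$, it suffices to bound the log-average of the $2^d$-th power. Interchanging sums, the inner log-average over $n$ of each non-degenerate (pairwise distinct $c_{\vec\omega} := y+\vec\omega\cdot\vec h$) product is precisely the log-averaged Chowla correlation in \eqref{non-2}, hence $o_{\omega\to\infty}(\log\omega)$ by Conjecture \ref{chow-log-2}. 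The degenerate tuples (two cube vertices coinciding) lie on a subvariety of size $O(H^d)$; after dividing by the normalization $\|1_{[0,H]}\|_{U^d(\Z)}^{2^d} \asymp H^{d+1}$ they contribute $O(\log\omega / H)$, which is acceptable. Boundary effects where some $n+c_{\vec\omega}$ falls outside $[X/\omega, X]$ are handled by routine truncation.

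The principal obstacle is a uniformity issue. Conjecture \ref{chow-log-2} gives pointwise-in-shifts decay $o_{\omega\to\infty}(\log\omega)$, but supplies no rate that is uniform in the shift tuple, whereas summing over $\Theta(H^{d+1})$ tuples and dividing by that same count yields $o(1)\cdot\log\omega$ only if the decay rate is uniform enough. One standard way around this is a compactness or diagonalization argument along a hypothetical failing subsequence $(H_k,\omega_k,X_k)$; the more robust alternative, which I expect the paper to pursue, is to translate a failure of local Gowers uniformity, via the inverse theorem for Gowers norms, into a correlation of $\lambda$ with a short-interval nilsequence, then patch these local nilsequences into a single zero-entropy topological model so as to reduce to the logarithmically averaged Sarnak conjecture, and thence (via the entropy decrement argument) back to the logarithmically averaged Chowla conjecture.
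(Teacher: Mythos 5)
Your direct-expansion route is exactly the one the paper sketches in a remark (the ``easy way to deduce Conjecture \ref{lgi-log} from Conjecture \ref{chow-log-2}''), and your accounting of the degenerate tuples ($O(H^d)$ of them against a normalization of order $H^{d+1}$) is correct. You have also correctly located the real difficulty: the rate in \eqref{non-2} depends on the shift tuple, while the expansion involves $\Theta(H^{d+1})$ tuples with entries as large as $H$, and $H$ is permitted to be as large as $\omega$.

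However, neither of your proposed resolutions closes this gap as stated. A diagonalization or compactness argument only yields \eqref{loggow} in the regime where $H$ grows sufficiently slowly as a function of $\omega$: for tuples with entries comparable to $H \asymp \omega$, the bound from Conjecture \ref{chow-log-2} is vacuous, since the $o_{\omega\to\infty}(1)$ rate attached to a tuple with entries of size $h$ need not take effect until $\omega$ is enormous compared with $h$. The missing ingredient is the second step of the paper's sketch: the Gowers--Cauchy--Schwarz inequality, which controls $\|\lambda\|_{U^d([n,n+H]\cap\Z)}$ by an average of local Gowers norms $\|\lambda\|_{U^d([m,m+H']\cap\Z)}$ over subintervals of much shorter length $H'$, thereby reducing the case of general $H \leq \omega$ to the slowly-growing case where diagonalization does apply. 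Your fallback --- routing through the inverse theorem, a patched zero-entropy model, the Sarnak conjecture, and the entropy decrement argument --- is indeed the paper's main chain of implications (Sections \ref{chow-sec}, \ref{detsec}, \ref{gow-sec}), but it is a separate and much heavier argument that your proposal gestures at without carrying out. As written, the direct proof is incomplete without the Gowers--Cauchy--Schwarz bootstrapping step.
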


The constraint $H \leq \omega$ is mainly for aesthetic convenience (otherwise one would have to replace the $o_{H \to \infty}(\log \omega)$ term on the right-hand side with $o_{H \to \infty}(\log \omega)+o_{\omega \to \infty}(\log \omega)$); in any event, the conjecture is strongest and most interesting in the regime where $H$ is small compared with $X$.  The $d=1$ form of this conjecture follows from the recent breakthrough work of Matomaki and Radziwi{\l}{\l} \cite{mr}, but the $d>1$ cases remain open.  However, when one considers the regime where $\omega$ is fixed and $H$ is large, the results in \cite{gtz}, \cite{gt-mobius} give the claim \eqref{loggow} when $H \geq X$, and when $d=2$ the results of \cite{zhan} extend this to $H \geq X^{5/8+\eps}$ for any fixed $\eps>0$.

The Gowers norms are known to be connected to a special type of deterministic sequence, namely the \emph{nilsequences}, through the \emph{inverse conjecture for the Gowers norms}, proven in \cite{gtz} after building on prior work in \cite{gowers-4}, \cite{gowers}, \cite{gt-inverseu3}, \cite{gtz-4}.  As we shall see later in this paper, this result shows that Conjecture \ref{lgi-log} can be placed in the following equivalent form.  Recall that an \emph{$s$-step nilmanifold} is a manifold of the form $G/\Gamma$ where $G$ is a connected, simply connected nilpotent Lie group of step $s$, and $\Gamma$ is a cocompact discrete subgroup of $G$.  We can give such a manifold a smooth Riemannian metric for the purpose of defining concepts such as a Lipschitz function on $G/\Gamma$; we will not specify the exact choice of this metric as any two such metrics are equivalent.  The topological dynamical systems $(G/\Gamma, x \mapsto gx)$ for $g \in G$ are known as \emph{nilsystems}, and sequences of the form $n \mapsto F(g^n x_0)$ for some continuous $F\colon G / \Gamma \to \C$, group element $g \in G$, and base point $x_0 \in G/\Gamma$ are known as (basic) \emph{nilsequences}.  It is not difficult to show that nilsystems have zero topological entropy, and hence all nilsequences are deterministic.

\begin{conjecture}[Logarithmically averaged local Liouville-nilsequences conjecture]\label{lnc-log}  Let $s \geq 0$.  Let $G/\Gamma$ be an $s$-step nilmanifold, let $F\colon G/\Gamma \to \C$ be Lipschitz continuous, and let $x_0 \in G/\Gamma$.  Then
\begin{equation}\label{lognil}
 \sum_{X/\omega \leq n \leq X} \frac{\sup_{g \in G} |\sum_{h=1}^H \lambda(n+h) F( g^{h} x_0 )|}{n}
 = o_{H \to \infty}(H \log \omega).
\end{equation}
for all $2 \leq H \leq \omega \leq X$.
\end{conjecture}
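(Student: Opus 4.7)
The plan is to prove this conjecture equivalent to Conjecture \ref{lgi-log}, using the inverse conjecture for the Gowers norms of Green, Tao, and Ziegler \cite{gtz}; the introduction already advertises \ref{lnc-log} as an equivalent form of \ref{lgi-log}, so this equivalence is exactly what needs justification.

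For the direction \ref{lgi-log} $\Rightarrow$ \ref{lnc-log}, I would use the classical ``direct'' fact (essentially from \cite{gt-inverseu3}) that bounded $s$-step nilsequences are anti-uniform for the $U^{s+1}$ norm: the $U^{s+1}$-dual norm of $h \mapsto F(g^h x_0)$ is bounded by a constant $C_{F,G/\Gamma}$ depending only on $\|F\|_{\mathrm{Lip}}$ and $G/\Gamma$, uniformly in $g \in G$ and $x_0 \in G/\Gamma$. Pairing against $\lambda \cdot 1_{[n,n+H]}$ yields the pointwise estimate
$$\sup_{g \in G} \left| \sum_{h=1}^H \lambda(n+h) F(g^{h} x_0) \right| \ll_{F,G/\Gamma} H\, \|\lambda\|_{U^{s+1}([n,n+H] \cap \Z)}$$
uniformly in $n$ and $x_0$. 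Weighting by $1/n$, summing over $n \in [X/\omega,X]$, and invoking Conjecture \ref{lgi-log} at level $d = s+1$ then yields \eqref{lognil}.

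For the converse direction \ref{lnc-log} $\Rightarrow$ \ref{lgi-log}, I would argue by contradiction. If \eqref{loggow} fails for some $d$, one extracts $\delta>0$ and sequences $H_j, \omega_j, X_j \to \infty$ obeying the constraints for which the left-hand side is at least $2\delta \log \omega_j$; a logarithmic pigeonhole isolates a set $S_j \subseteq [X_j/\omega_j, X_j]$ of $1/n$-mass at least $\delta$ on which $\|\lambda\|_{U^d([n,n+H_j] \cap \Z)} \geq \delta$. The quantitative inverse theorem of \cite{gtz} then attaches to each $n \in S_j$ a $(d-1)$-step nilmanifold $G_n/\Gamma_n$, a Lipschitz function $F_n$, and elements $g_n, x_{0,n}$ with $|\sum_{h=1}^{H_j} \lambda(n+h) F_n(g_n^{h} x_{0,n})| \geq \kappa(\delta) H_j$, where the dimension, step, and complexity of $G_n/\Gamma_n$ together with the Lipschitz norm of $F_n$ are all controlled purely by $\delta$.

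The main obstacle is that this nilmanifold data $(G_n/\Gamma_n, F_n, x_{0,n})$ varies with $n$, whereas Conjecture \ref{lnc-log} fixes a single triple. To bridge this gap I would exploit compactness: the moduli space of $(d{-}1)$-step nilmanifolds of bounded dimension and complexity is compact up to isomorphism, and on each such nilmanifold the unit ball of Lipschitz functions is totally bounded by Arzel\`a--Ascoli; the basepoint $x_{0,n}$ can be absorbed by replacing $F$ with a suitable translate. At the cost of halving $\kappa(\delta)$, one can then restrict the inverse-theorem output to a finite list of pairs $(G/\Gamma, F)$, and pigeonhole to fix one such pair on a $1/n$-dense sub-collection of $S_j$, contradicting Conjecture \ref{lnc-log} applied to that $(G/\Gamma, F)$. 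Making this compactness and approximation step quantitatively precise, while keeping the $\sup_{g \in G}$ inside the sum, is the technical heart of the argument.
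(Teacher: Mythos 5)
Your argument is correct, but it takes a genuinely different route from the paper's. The paper's own derivation of Conjecture \ref{lnc-log} (Section \ref{detsec}) deduces it from the logarithmically averaged \emph{Sarnak} conjecture \ref{sar-log}: one extracts sparse, $H_i$-separated intervals on which the supremum is nearly attained, concatenates the extremal nilsequences $h \mapsto F(g_n^h x_0)$ into a single sequence $f$, and then proves $f$ is deterministic by a metric-entropy count --- factoring arbitrary polynomial sequences on $G/\Gamma$ into a bounded part and a $\Gamma$-part, discretizing the bounded coefficients to a $N^{-A}$-net, and concluding the family of nilsequences has ``uniform zero entropy.'' You instead deduce \ref{lnc-log} from \ref{lgi-log} via the anti-uniformity of nilsequences (the dual-norm bound $\sup_g |\sum_h \lambda(n+h)F(g^hx_0)| \ll_{F,G/\Gamma} H\|\lambda\|_{U^{s+1}([n,n+H]\cap\Z)}$, uniform in $g,x_0$); this is exactly the ``easy converse'' via \cite[Proposition 12.6]{gt-inverseu3} that the paper mentions at the end of Section \ref{gow-sec} and leaves to the reader, and your converse direction is essentially a restatement of Section \ref{gow-sec} itself. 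What your route buys: it is far simpler, avoids the entropy computation entirely, and (unlike the entropy-decrement step elsewhere) works equally well without logarithmic averaging. What it loses: in the cycle of implications proving Theorem \ref{main}, the arrow \ref{sar-log} $\Rightarrow$ \ref{lnc-log} is the \emph{only} arrow exiting \ref{sar-log}; if you replace it by \ref{lgi-log} $\Rightarrow$ \ref{lnc-log}, then \ref{sar-log} is merely implied by the other conjectures and no longer proven equivalent to them. So your argument is a valid conditional proof of \ref{lnc-log} and a correct proof of the equivalence \ref{lgi-log} $\Leftrightarrow$ \ref{lnc-log}, but it cannot substitute for Section \ref{detsec} in the proof of the full Theorem \ref{main}.
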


Note carefully that the supremum in $g$ here is \emph{inside} the summation in $n$.  Analogously with the preceding conjecture, the $s=0$ case of this conjecture was established in \cite{mr}, but the $s \geq 1$ cases remain open.  As with Conjecture \ref{lgi-log}, in the regime where $\omega$ is fixed and $H$ is large, the results in \cite{gt-mobius} give the above claim for $H \geq X$, and when $s=1$ the results of Zhan \cite{zhan} extend this to $H \geq X^{5/8+\eps}$.  A variant of the $s=1$ case of Conjecture \ref{lnc-log}, in which the supremum in $g,x_0$ is placed outside the summation in $n$, but $\omega$ can be taken to be independent of $x$, was established in \cite{mrt}.

We are now ready to state the main result of this paper.

\begin{theorem}\label{main}  Conjectures \ref{chow-log}, \ref{chow-log-2}, \ref{sar-log}, \ref{lgi-log}, \ref{lnc-log} are equivalent.
\end{theorem}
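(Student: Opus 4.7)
The plan is to prove the equivalence via a cycle of implications, with the Green--Tao--Ziegler inverse conjecture for the Gowers norms and the entropy decrement argument of \cite{tao-chowla} as the main tools. Specifically, I will establish the cycle
$$\ref{chow-log}\Rightarrow\ref{chow-log-2}\Rightarrow\ref{lgi-log}\Leftrightarrow\ref{lnc-log}\Rightarrow\ref{chow-log}$$
together with $\ref{chow-log}\Leftrightarrow\ref{sar-log}$.

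The easy implications are as follows. The specialisation $\ref{chow-log}\Rightarrow\ref{chow-log-2}$ is trivial. The equivalence $\ref{lgi-log}\Leftrightarrow\ref{lnc-log}$ is a direct application of the inverse conjecture for the Gowers norms \cite{gtz}: smallness of $\|\lambda\|_{U^{s+1}([n,n+H])}$ is equivalent, up to a Lipschitz approximation on the nilmanifold $G/\Gamma$, to smallness of the correlation of $\lambda$ on $[n,n+H]$ with arbitrary step-$s$ nilsequences of bounded complexity (the supremum over $g\in G$ in \ref{lnc-log} arising from this approximation). The implication $\ref{chow-log}\Rightarrow\ref{sar-log}$ is the standard deduction of Sarnak from Chowla via the Furstenberg correspondence, lifted to the logarithmic average: a deterministic sequence is approximated on a zero-entropy system by finite combinations of smooth cylinder functions, each of which is controlled by an appropriate instance of \ref{chow-log}.

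The direction $\ref{chow-log-2}\Rightarrow\ref{lgi-log}$ is carried out by expanding $\|\lambda\|_{U^d([n,n+H])}^{2^d}$ as a normalised count of $2^d$-fold correlations of $\lambda$ at the vertices of parallelepipeds inside $[n,n+H]$; after swapping the sum over $n$ with the sum over parallelepiped shapes, each non-degenerate correlation is controlled by \ref{chow-log-2}, with the degenerate parallelepipeds contributing negligibly by volume. The converse $\ref{lnc-log}\Rightarrow\ref{chow-log}$ uses the generalized von Neumann inequality to bound a short-window $k$-fold correlation of $\lambda$ by its local $U^{k-1}$ Gowers norm on that window, followed by the inverse theorem and \ref{lnc-log} to convert this into a bound in terms of local nilsequence correlations; a smoothing argument then transfers the short-window estimate to the logarithmically averaged Chowla statement. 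The delicate uniformity in window position (as $H$ varies slowly with $\omega$) is the first place where the entropy decrement argument of \cite{tao-chowla} enters the proof.

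The principal obstacle is the implication $\ref{sar-log}\Rightarrow\ref{chow-log}$, which closes the cycle. The idea is to interpret $\sum_n \lambda(n+h_1)\lambda(n+h_2)\cdots\lambda(n+h_k)/n$ as a Sarnak-type sum against the shifted Liouville product $g(n) := \lambda(n+h_2)\cdots\lambda(n+h_k)$, and to use the entropy decrement argument to show that, on logarithmic average, $g$ may be replaced by a genuinely deterministic approximant coming from a zero-entropy dynamical system, at which point \ref{sar-log} applies and yields the desired cancellation. The main technical difficulty is extending the $k=2$ entropy decrement of \cite{tao-chowla} (which controlled pairwise Fourier-type statistics via residues modulo small primes) to the higher-$k$ nilsequence setting revealed by the inverse theorem: the decrement must be performed along each layer of the nilpotent filtration, and the arithmetic (periodic) and algebraic (nilpotent) sources of structure in $\lambda$ must be reconciled against the complexity of the nilsequence produced.
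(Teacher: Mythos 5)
Your cycle has one leg that does not hold up: the step that brings Conjecture \ref{sar-log} back into the equivalence class. You propose to prove \ref{sar-log} $\Rightarrow$ \ref{chow-log} by viewing $\sum_n \lambda(n+h_1)g(n)/n$ with $g(n) := \lambda(n+h_2)\cdots\lambda(n+h_k)$ as a Sarnak-type sum, and then using the entropy decrement argument to replace $g$ by ``a genuinely deterministic approximant coming from a zero-entropy dynamical system.'' This begs the question. The sequence $g$ is built from the Liouville function itself, whose orbit closure is expected to have \emph{full} topological entropy precisely when Chowla holds; if $g$ could be replaced (in a way that preserves its correlation with $\lambda(n+h_1)$) by a deterministic sequence, Chowla would follow from Sarnak essentially for free, and that replacement is exactly the content one must prove. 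Moreover, this is not what the entropy decrement argument does: it locates a scale $H$ at which the sign pattern $(\lambda(an+j))_{j\le H}$ is approximately independent of the residues of $n$ modulo primes comparable to $\eps^2 H$, so that the multiplicativity identity $\lambda(n)=-\lambda(pn)$ can be averaged over $p$; it does not produce a zero-entropy approximant to $\lambda$. In the paper the entropy decrement is used for the implication \ref{lgi-log} $\Rightarrow$ \ref{chow-log} (which you also invoke, correctly, for the leg \ref{lnc-log} $\Rightarrow$ \ref{chow-log}), and nowhere near the Sarnak conjecture.

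The paper closes the loop differently: it proves \ref{sar-log} $\Rightarrow$ \ref{lnc-log} directly. Assuming \ref{lnc-log} fails, one has a logarithmically dense, $H_i$-separated set of starting points $n$ at which $\lambda$ correlates with some nilsequence $h\mapsto F(g_n^h x_0)$ with $g_n$ depending on $n$; gluing these blocks on disjoint intervals produces a single sequence $f$ with $\sum \lambda(n)f(n)/n \gg \eps^2\log\omega_i$. The whole point is then to show this glued $f$ is deterministic, which requires a \emph{uniform} metric entropy bound of $\exp(O(\eps N))$ for the family of all length-$N$ blocks of nilsequences of bounded complexity (over arbitrary $g\in G$, not one nilsystem at a time); this is established via the factorization of polynomial sequences into a bounded part times a $\Gamma$-valued part and a net argument on the coefficients. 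Your remaining legs are sound in outline --- the trivial specialisation, the moment-method deduction of \ref{sar-log} from \ref{chow-log-2}, the inverse-theorem equivalence \ref{lgi-log} $\Leftrightarrow$ \ref{lnc-log}, and the entropy-decrement-plus-generalised-von-Neumann route to \ref{chow-log} --- and your alternative routing \ref{chow-log-2} $\Rightarrow$ \ref{lgi-log} by expanding the Gowers norm is the one the paper sketches in a remark (you would still need the Gowers--Cauchy--Schwarz step to handle $H$ that is not slowly growing in $\omega$). But without a correct argument placing \ref{sar-log} at the tail of some implication into the cycle, the equivalence is not established.
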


\begin{remark}  An inspection of the arguments in this paper reveals that all of the equivalences in this theorem continue to hold if we enforce a fixed functional relationship between $\omega$ and $X$.  For instance, choosing the relationship $X=\omega$, we can show the equivalence of the logarithmically averaged Chowla conjecture
$$
 \sum_{n \leq X} \frac{\lambda(n + h_1) \dots \lambda(n + h_k)}{n} = o_{X \to \infty}(\log X) $$
for all fixed distinct natural numbers $h_1,\dots,h_k$, with the logarithmically averaged Sarnak conjecture
$$ \sum_{n \leq X} \frac{\lambda(n) f(n)}{n} = o_{X \to \infty}(\log X)$$
for all fixed deterministic sequences $f$.   
\end{remark}

We summarise the key implications in this theorem as follows (see Figure \ref{fig:equivs}):

\begin{figure} [t]
\centering
\includegraphics{./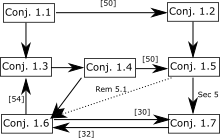}
\caption{Logical implications between conjectures, annotated by the reference or section where the implication (or some minor variant of that implication) is essentially proven.  Implications without any annotation are trivial. The dotted arrow refers to the potential implication sketched in Remark \ref{Entr}.  One could enlarge this diagram by adding non-logarithmically-averaged versions of Conjectures \ref{chow-log-2}, \ref{lgi-log}, \ref{lnc-log}; we leave this task to the interested reader.}
\label{fig:equivs}
\end{figure}

\begin{itemize}
\item The implication of Conjecture \ref{chow-log-2} from Conjecture \ref{chow-log} is trivial.
\item The implication of Conjecture \ref{sar-log} from Conjecture \ref{chow-log-2} was essentially already observed in \cite{sarnak}, but for the convenience of the reader we give a self-contained derivation in Section \ref{chow-sec}.
\item The derivation of Conjecture \ref{chow-log} from Conjecture \ref{lgi-log} follows from adapting the entropy decrement argument in \cite{tao-chowla}, and is given in Section \ref{entropy-dec}.
\item The derivation of Conjecture \ref{lgi-log} from Conjecture \ref{lnc-log} follows from the inverse conjecture for the Gowers norms \cite[Theorem 1.1]{gtz}, and is given in Section \ref{gow-sec}.  (The converse implication is proven similarly using the converse \cite[Proposition 12.6]{gt-inverseu3} to the inverse conjecture, which is much easier to prove.)
\item Finally, the derivation of Conjecture \ref{lnc-log} from Conjecture \ref{sar-log} follows from an estimation of the metric entropy of the space of nilsequences of controlled complexity, and is morally (though not quite) a consequence of the zero-entropy nature of nilsystems; we detail this in Section \ref{detsec}.
\end{itemize}

\begin{remark} Most of the arguments in this paper should extend if one replaces the Liouville function by a more general bounded multiplicative function; the main obstruction to this is that one would now need some sort of ``higher order restriction theorem for the primes'' in the entropy decrement step (used to deduce Conjecture \ref{chow-log} from Conjecture \ref{lgi-log}), generalising the ``linear restriction theorem'' used in \cite[Lemma 3.7]{tao-chowla}.  We will not pursue this matter here.
\end{remark}

\begin{remark}  The implication of Conjecture \ref{chow-log} from Conjecture \ref{lgi-log} is the only part of the argument that requires the logarithmic averaging; all of the other implications are valid if Conjectures \ref{chow-log}, \ref{chow-log-2}, \ref{sar-log}, \ref{lgi-log}, \ref{lnc-log} are replaced by their non-logarithmically averaged counterparts (such as Conjecture \ref{chow} or Conjecture \ref{sar}).
\end{remark}

\begin{remark} In addition to the above implications, there is also an easy way to deduce Conjecture \ref{lgi-log} from Conjecture \ref{chow-log-2}.  Indeed, from expanding out the Gowers norms and interchanging summations, we see from Conjecture \ref{chow-log-2} that
$$
 \sum_{X/\omega \leq n \leq X} \frac{\| \lambda \|_{U^d([n,n+H] \cap \Z)}^{2^d}}{n} = o_{H \to \infty}(\log \omega)
$$
if $H$ is sufficiently slowly growing as a function of $\omega$, which by H\"older's inequality gives Conjecture \ref{lgi-log} in the case when $H$ is sufficiently slowly growing; one can then use the Gowers-Cauchy-Schwarz inequality \cite{gowers} to control the Gowers norms for large values of $H$ in terms of Gowers norms for small values of $H$, giving Conjecture \ref{lgi-log} in general; we leave the details to the interested reader.  See also Remark \ref{Entr} for another possible implication that avoids the use of the (difficult) inverse conjecture
for the Gowers norms.
\end{remark}

\subsection{Notation}\label{notation-sec}

We adopt the usual asymptotic notation of $A \ll B$, $B \gg A$, or $A = O(B)$ to denote the assertion that $|A| \leq CB$ for some constant $C$.  If we need $C$ to depend on an additional parameter we will denote this by subscripts, e.g. $A = O_\eps(B)$ denotes the bound $|A| \leq C_\eps B$ for some $C_\eps$ depending on $\eps$.  

In all of our results, there will be a number of asymptotic parameters such as $X, \omega, H$, as well as ``fixed'' quantities (such as $k$, $f$, $d$, $a_1,\dots,a_k$, $b_1,\dots,b_k$) that do not depend on the asymptotic parameters; the distinction should be clear from context.  (In particular, in each of the conjectures stated in the introduction, the variables introduced before the word ``Then'' are fixed, and the variables appearing afterwards are asymptotic parameters.) Given an asymptotic parameter such as $X$, we use $A = o_{X \to \infty}(B)$ to denote the bound $|A| \leq c(X) B$ where $c(X)$ depends only on $X$ and fixed quantities and goes to zero as $X \to \infty$ (subject to whatever restrictions are in place on the asymptotic parameters, such as $1 \leq H \leq \omega \leq X$).

If $E$ is a statement, we use $1_E$ to denote the indicator, thus $1_E=1$ when $E$ is true and $1_E=0$ when $E$ is false, and $1_A(x) = 1_{x \in A}$ for any set $A$ and point $x$.

Given a finite set $S$, we use $|S|$ to denote its cardinality.

For any real number $\alpha$, we write $e(\alpha) := e^{2\pi i \alpha}$; this quantity lies in the unit circle $S^1 := \{ z \in \C: |z|=1\}$.  By abuse of notation, we can also define $e(\alpha)$ when $\alpha$ lies in the additive unit circle $\R/\Z$.

All sums and products will be over the natural numbers $\N =\{1,2,\dots\}$ unless otherwise specified, with the exception of sums and products over $p$ which is always understood to be prime.

We use $d|n$ to denote the assertion that $d$ divides $n$, and $n\ (d)$ to denote the residue class of $n$ modulo $d$.  

We will frequently use probabilistic notation such as the expectation $\E \X$ of a random variable $\X$ or a probability $\P(E)$ of an event $E$. We will use boldface symbols such as $\X$, $\Y$ or $\n$ to refer to random variables.

A particularly important random variable for us will be the following.  Suppose we are given some parameters $2 \leq \omega \leq X$.  We then define $\n$ to be the random natural number $\{n: X/\omega \leq n \leq X\}$ drawn with probability distribution
$$ \P(\n = n) := \frac{1/n}{\sum_{X/\omega \leq n \leq X} 1/n}.$$
Since $\sum_{X/\omega \leq n \leq X} 1/n$ is comparable to $\log \omega$, we can rewrite many of the logarithmically averaged claims conjectured in the introduction in probabilistic notation.  Specifically, the bound \eqref{non} may be rewritten as
\begin{equation}\label{non-equiv}
\E \lambda(a_1 \n + b_1) \dots \lambda(a_k \n + b_k) = o_{\omega \to \infty}(1),
\end{equation}
and similarly \eqref{non-2} may be rewritten as
\begin{equation}\label{non2-equiv}
\E \lambda(\n + h_1) \dots \lambda(\n + h_k) = o_{\omega \to \infty}(1).
\end{equation}
Continuing in this vein, \eqref{logsar} is equivalent to
\begin{equation}\label{logsar-equiv}
 \E \lambda(\n) f(\n) = o_{\omega \to \infty}(1),
\end{equation}
\eqref{loggow} is equivalent to
\begin{equation}\label{loggow-equiv}
 \E \| \lambda \|_{U^d([\n,\n+H] \cap \Z)} = o_{H \to \infty}(1)
\end{equation}
and \eqref{lognil} is equivalent to
\begin{equation}\label{lognil-equiv}
\E \sup_{g \in G} \left|\sum_{h=1}^H \lambda(\n+h) F( g^{\n} x_0 )\right| = o_{H \to \infty}(H).
\end{equation}

We will rely heavily on the following approximate affine invariance of the random variable $\n$:

\begin{lemma}[Approximate affine invariance] \label{aai} Let $q$ be a natural number, and let $r$ be an integer.  Suppose that $\omega$ is sufficiently large depending on $q,r$. Then for any complex-valued random variable $F(\n)$ depending on $\n$ and bounded in magnitude by $O(1)$, one has
$$ \E F(\n) 1_{\n = r\ (q)} = \frac{1}{q} \E F(q\n+r) + o_{\omega \to \infty}(1).$$
\end{lemma}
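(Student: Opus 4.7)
The plan is to unpack both sides of the claimed identity using the explicit distribution of $\n$ and reduce to an elementary comparison of two logarithmically weighted sums. Writing $Z := \sum_{X/\omega \leq n \leq X} 1/n = \log \omega + O(1)$, and multiplying the asserted identity through by $Z$, it suffices to show
$$ \sum_{\substack{X/\omega \leq n \leq X \\ n \equiv r\,(q)}} \frac{F(n)}{n} - \frac{1}{q} \sum_{X/\omega \leq m \leq X} \frac{F(qm+r)}{m} = o_{\omega \to \infty}(\log \omega). $$
The natural first step is the substitution $n = qm+r$ in the left-hand sum, which turns the displayed difference into
$$ \Delta := \sum_{m \in I_1} \frac{F(qm+r)}{qm+r} - \sum_{m \in I_2} \frac{F(qm+r)}{qm}, $$
where $I_1 := \{m \in \Z : X/\omega \leq qm+r \leq X\}$ and $I_2 := [X/\omega, X] \cap \N$. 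It is then enough to establish $\Delta = O_{q,r}(1)$, so that $\Delta/Z = O_{q,r}(1/\log\omega) = o_{\omega\to\infty}(1)$.

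I would bound $\Delta$ by splitting it into three easily controlled contributions. On the ``bulk'' intersection $I_1 \cap I_2$, the weight difference is
$$ \frac{1}{qm+r} - \frac{1}{qm} = -\frac{r}{qm(qm+r)}, $$
and since $m \geq 1$ and $qm+r \geq X/\omega \geq 1$ in this range, summing with $|F| = O(1)$ gives a contribution bounded by $\tfrac{|r|}{q}\sum_{m\geq 1} \tfrac{1}{m^2} = O_{q,r}(1)$. The portion of $I_1$ with $m \leq 0$ can only be nonempty when $r \geq 1$ and then has cardinality $O_{q,r}(1)$, and every term is $O(1)$ since $|qm+r| \geq X/\omega \geq 1$, so this also contributes $O_{q,r}(1)$. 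Finally, the symmetric difference $(I_1 \cap \N) \triangle I_2$ is contained, up to $O_{q,r}(1)$ boundary shifts, in $[X/(q\omega), X/\omega] \cup [X/q, X]$, each interval having logarithmic mass $\log q + O(1) = O_q(1)$; since $|F| = O(1)$ and the weights are at most $1/(qm)$ there, this contribution is $O_q(1)$.

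Adding the three pieces yields $\Delta = O_{q,r}(1)$, and dividing through by $Z \sim \log \omega$ gives the desired $o_{\omega \to \infty}(1)$ bound. The main ``obstacle'' is really not analytic depth but careful bookkeeping of the boundary and small-$m$ terms: one must take $\omega$ sufficiently large in terms of $q$ and $r$ so that $X/\omega$ is not swamped by the fixed quantities $|r|$ and $q$, ensuring that $I_1 \cap I_2$ is genuinely the ``bulk'' of both sums and that the symmetric-difference intervals behave as described. Beyond these routine checks, nothing deeper than integral comparison of logarithmic sums is required.
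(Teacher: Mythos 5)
Your proof is correct and is essentially the same change-of-variables argument that the paper defers to (\cite[Lemma 2.5]{tao-chowla}): substitute $n=qm+r$, compare the two logarithmic weights $\frac{1}{qm+r}$ and $\frac{1}{qm}$, and absorb the resulting $O_{q,r}(1)$ boundary and weight-discrepancy errors against the normalisation $Z\asymp\log\omega$. The only cosmetic slip is in the bulk term, where the bound $\frac{|r|}{qm(qm+r)}\le\frac{|r|}{qm^{2}}$ presupposes $qm+r\ge m$, which can fail when $q=1$ and $r<0$; since $qm+r\ge X/\omega\ge 1$ on that range, one still gets $O_{q,r}(1)$ after handling the $O_{q,r}(1)$ smallest values of $m$ separately, so nothing essential is lost.
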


\begin{proof} See \cite[Lemma 2.5]{tao-chowla}.  (The statement there involved additional parameters $H_+$, $A$ intermediate between $q,r$ and $\omega$, but it is easy to see that one can delete these parameters from the statement and proof of that lemma.)
\end{proof}

Specialising this lemma to the case $q=1$, we obtain the approximate translation invariance
\begin{equation}\label{ati}
 \E F(\n) =  \E F(\n+r) + o_{\omega \to \infty}(1)
\end{equation}
when $\omega$ is sufficiently large depending on $r$.  This translation invariance will be sufficient for establishing the implications in Section \ref{chow-sec} and Section \ref{gow-sec}, but the argument in Section \ref{entropy-dec} requires the full affine invariance from Lemma \ref{aai}, which is only available in the logarithmically averaged setting.

\subsection{Acknowledgments}

The author is supported by NSF grant DMS-1266164 and by a Simons Investigator Award. The author also thanks Ben Green for comments and encouragement.

\section{From Chowla to Sarnak}\label{chow-sec}

In this section we deduce Conjecture \ref{sar-log} from Conjecture \ref{chow-log-2}.  Our arguments are an adaptation of those in \cite{tao-sarnak}.

Fix a topological dynamical system $(Y,T)$ of zero topological entropy, a base point $x_0 \in Y$, and a continuous function $F\colon Y \to \C$.  We allow all implied constants in the asymptotic notation to depend on these quantities.  
We introduce the following parameters:
\begin{itemize}
\item We let $\eps > 0$ be a quantity that is sufficiently small (depending on the fixed quantities $(Y,T),x_0,F$).
\item Then, we let $H$ be a quantity that is sufficiently large depending on $\eps$ (and the fixed quantities).
\item Finally, we let $2 \leq \omega \leq X$ be quantities with $\omega$ sufficiently large depending on $\eps,H$ (and the fixed quantities).
\end{itemize}

Let $\n$ be as in the previous section.  Using the form \eqref{logsar-equiv} of Conjecture \ref{sar-log}, we see that it will suffice to establish the bound
$$ \E \lambda(\n) F(T^\n x_0) \ll \eps $$
under the above assumptions on $\eps, H, \omega, X$.

From approximate translation invariance \eqref{ati}, we have
$$ \E \lambda(\n+h) F(T^{\n+h} x_0) = \E \lambda(\n) F(T^\n x_0) + o_{\omega \to \infty}(1)$$
for any $1 \leq h \leq H$, so in particular upon averaging in $h$ we obtain
$$ \E \frac{1}{H} \sum_{h=1}^H \lambda(\n+h) F(T^{\n+h} x_0) = \E \lambda(\n) F(T^\n x_0) + o_{\omega \to \infty}(1).$$
Thus it will suffice to show that
$$ \frac{1}{H} \sum_{h=1}^H \lambda(\n+h) F(T^{\n+h} x_0) \ll \eps $$
with probability $1-O(\eps)$, since this expression is already bounded by $O(1)$.

As $F$ is uniformly continuous, there exists $\delta>0$ depending on $\eps,F$ such that $|F(x)-F(y)| \leq \eps$ whenever $d(x,y) \leq \delta$.  As $(Y,T)$ has zero entropy, we see (if $H$ is large enough) that we can cover $Y$ by $O( \exp( \eps^3 H ) )$ balls of radius $\delta$ in the $d_H$ metric.  That is to say, we can find points $x_1,\dots,x_m \in Y$ with $m \ll \exp(\eps^3 H)$ such that for each $y \in Y$, there exists $1 \leq i \leq m$ such that
$$ d(T^h x_i, T^h y) \leq \delta$$
for all $1 \leq h \leq H$.  Applying this with $y$ replaced by $T^\n x_0$, we conclude that there exists a random variable $1 \leq \i \leq m$ such that
$$ d(T^h x_\i, T^{\n+h} x_0) \leq \delta$$
for all $1 \leq h \leq H$, and in particular
$$ \frac{1}{H} \sum_{h=1}^H \lambda(\n+h) F(T^{\n+h} x_0) = \frac{1}{H} \sum_{h=1}^H \lambda(\n+h) F(T^{h} x_\i) + O(\eps).$$
Thus it will suffice to show that
$$ \left|\sum_{h=1}^H \lambda(\n+h) F(T^{h} x_\i)\right| \leq \eps H$$
with probability $1-O(\eps)$.  Since there are only $O(\exp(\eps^3 H))$ choices for $\i$, it suffices by the union bound to show that
$$ \left|\sum_{h=1}^H \lambda(\n+h) F(T^{h} x_i)\right| \leq \eps H$$
with probability $1 - O( \exp( c \eps^2H ) )$ for some fixed $c>0$ and all (deterministic) $i=1,\dots,m$.  

Let $k \leq H/2$ be a natural number to be chosen later.  By the Chebyshev inequality, we have
\begin{equation}\label{nal}
\P\left( \left|\sum_{h=1}^H \lambda(\n+h) F(T^{h} x_i)\right| > \eps H \right) \leq (\eps H)^{-2k} \E \left|\sum_{h=1}^H \lambda(\n+h) F(T^{h} x_i)\right|^{2k}.
\end{equation}

On the other hand from Conjecture \ref{chow-log-2} (in the form \eqref{non2-equiv}), we have
\begin{equation}\label{lan}
 \E \lambda(\n+h_1) \dots \lambda(\n+h_{2k}) = o_{\omega \to \infty}(1)
\end{equation}
for any $1 \leq h_1 < \dots < h_{2k} \leq H$, since $\omega$ is assumed sufficiently large depending on $H$.

Expanding out the expression inside the expectation in \eqref{nal}, we obtain $H^{2k}$ terms, most of which are $o_{\omega \to \infty}(1)$ thanks to \eqref{lan}.  The cumulative contribution of all such terms to \eqref{nal} is still $o_{\omega \to \infty}$, since $\omega$ is assumed large depending on $H$ (and hence on $k$).  The only terms which are not of this form are terms in which each factor of $\lambda(\n+h)$ occurs at least twice (so in particular at most $k$ different values of $h$ appear).  Crude counting shows that there are at most $k^{2k} \binom{H}{k} = O( Hk )^k$ such terms, each of which contributes at most $O(1)$ to the above sum, and hence
$$\P\left( \left|\sum_{h=1}^H \lambda(\n+h) F(T^{h} x_i)\right| > \eps H \right) \ll (\eps H)^{-2k} O(Hk)^k + o_{\omega \to \infty}(1).$$
Choosing $k$ to be a small multiple of $\eps^2 H$ (rounded to the nearest integer), we obtain the claim.

\section{The entropy decrement argument}\label{entropy-dec}

In this section we use the entropy decrement argument from \cite{tao-chowla}, together with some Cauchy-Schwarz type manipulations similar to that used in \cite{fhk}, \cite{wz}, as well as known results on linear equations on primes \cite{gt-primes}, to deduce Conjecture \ref{chow-log} from Conjecture \ref{lgi-log}.  

We first make some easy reductions in Conjecture \ref{chow-log}.  Firstly, we may assume $k > 2$, since the $k \leq 2$ case was already established in \cite{tao-chowla}.  Next, if we set $a := a_1 \dots a_k$, then $\lambda(a_i n + b_i)$ is a constant multiple of $\lambda(a n + b'_i)$, where $b'_i := a_1 \dots a_{i-1} b_i a_{i+1} \dots a_k$.  Thus (replacing $a_i,b_i$ with $a,b'_i$ for each $i$) we may assume without loss of generality that $a_1=\dots=a_k=a$, in which case the condition $a_ib_j-a_j b_i \neq 0$ now simplifies to the requirement that the $b_1,\dots,b_k$ are distinct.  

Henceforth $k,a,b_1,\dots,b_k$ are considered fixed.  We allow all implied constants in the argument below to depend on $k,a,b_1,\dots,b_k$.  We select some further quantities:

\begin{itemize}
\item First, we let $\eps>0$ be a quantity that is sufficiently small depending on $k,a,b_1,\dots,b_k$.
\item Then, we select a natural number $w$ that is sufficiently large depending on $k,a,b_1,\dots,b_k,\eps$.
\item Then, we select a natural number $H_-$ that is sufficiently large depending on $k,a,b_1,\dots,b_k,\eps,w$.
\item Then, we select a natural number $H_+$ that is sufficiently large depending on $k,a,b_1,\dots,b_k,\eps,w,H_-$.
\item Finally, we let $\omega, X$ be quantities such that $2 \leq \omega \leq X$ such that $\omega$ is sufficiently large depending on $k,a,b_1,\dots,b_k,\eps,w,H_-,H_+$.
\end{itemize}
The reader may find it convenient to keep the hierarchy
$$ 1 \ll \frac{1}{\eps} \ll w \ll H_- \ll H_+ \ll \omega \leq X $$
in mind in the arguments which follow.

Using the form \eqref{non-equiv}, it will now suffice to establish the bound
$$ \E \prod_{i=1}^k \lambda(a\n + b_i) \ll \eps.$$
Using approximate translation invariance \eqref{ati}, we may assume without loss of generality that $b_1=0$.

Assume for sake of contradiction that the claim failed, thus 
\begin{equation}\label{ao}
 \left|\E \prod_{i=1}^k \lambda(a\n + b_i)\right| \gg \eps.
\end{equation}

We now use Lemma \ref{aai} to convert the single average in \eqref{ao} to a double average, as in \cite[Proposition 2.6]{tao-chowla}:

\begin{proposition}\label{cloc}  Suppose that \eqref{ao} holds. Let $H_- \leq H \leq H_+$, and let ${\mathcal P}_H$ denote the set of primes between $\frac{\eps^2}{2} H$ and $\eps^2 H$.  Then
$$ \left|\E \sum_{p \in {\mathcal P}_H} \sum_j 1_{a\n + j = 0\ (ap)} \prod_{i=1}^k \lambda(a\n + j + p b_i) 1_{[1,H]}(j+pb_i)\right| \gg \eps \frac{H}{\log H}.$$
\end{proposition}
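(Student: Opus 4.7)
The plan is to exploit the complete multiplicativity of $\lambda$ to introduce the prime parameter $p$, and then apply Lemma \ref{aai} (together with the translation invariance \eqref{ati}) to create the shift parameter $j$.  First, I would use that $\lambda(pm) = -\lambda(m)$ for every prime $p$ and positive integer $m$, yielding the pointwise identity
$$ \prod_{i=1}^k \lambda(a\n+b_i) = (-1)^k \prod_{i=1}^k \lambda(pa\n + pb_i). $$
Hence \eqref{ao} forces $|\E \prod_i \lambda(pa\n + pb_i)| \gg \eps$ uniformly in $p \in {\mathcal P}_H$, all with a common sign, so no cancellation occurs upon summing in $p$.

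Next, for fixed $p \in {\mathcal P}_H$ and an integer $j$ with $a \mid j$ and $j+pb_i \in [1,H]$ for every $i$, I would note that, since $\gcd(a,p)=1$ (for $p \geq \eps^2 H/2 \gg a$), there is a unique $r \in \{0,\dots,p-1\}$ with $ar+j \equiv 0 \pmod{ap}$; writing $ar+j = apc_j$, the integer $c_j = O(1/\eps^2)$ is bounded.  The constraint $a\n+j \equiv 0\ (ap)$ is equivalent to $\n \equiv r\ (p)$, so I can apply Lemma \ref{aai} with $q=p$ to $F(m) := \prod_i \lambda(am+j+pb_i)$ and then use \eqref{ati} to absorb the bounded shift $c_j$ inside $\E \prod_i \lambda(ap(\n+c_j)+pb_i)$.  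This produces
$$ \E \bigl[ 1_{a\n+j \equiv 0\ (ap)} \prod_{i=1}^k \lambda(a\n+j+pb_i) \bigr] = \frac{1}{p}\, \E \prod_{i=1}^k \lambda(pa\n+pb_i) + o_{\omega \to \infty}(1). $$
Terms with $a \nmid j$ are automatically zero because the indicator vanishes.

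Finally, for each $p \in {\mathcal P}_H$ the count of admissible $j$ is $H/a + O(\eps^2 H)$ (using $b_1 = 0$ and $pb_{\max} = O(\eps^2 H)$).  Summing over such $j$, inserting the identity from the first step, and summing over $p \in {\mathcal P}_H$ using the Mertens-type estimate $\sum_{p \in {\mathcal P}_H} 1/p \asymp 1/\log H$, the main term will have magnitude $\gg \eps \cdot H/(a \log H) \gg \eps H/\log H$, while the cumulative error is $o_{\omega \to \infty}(H \cdot |{\mathcal P}_H|) + O(\eps H/\log H)$, both of which are dominated by the main term once $\omega$ is sufficiently large and $\eps$ sufficiently small.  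The chief technical obstacle will be controlling the accumulation of the $o_{\omega \to \infty}(1)$ errors from Lemma \ref{aai} across the $O(H \cdot |{\mathcal P}_H|) = O(\eps^2 H^2 / \log H)$ terms of the double sum; this is afforded by the hierarchy $H_+ \ll \omega$ fixed in the setup, which allows $\omega$ to be taken large enough (as a function of $H$) that each individual $o_{\omega \to \infty}(1)$ error is smaller than any desired fixed inverse power of $H \cdot |{\mathcal P}_H|$.
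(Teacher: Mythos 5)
Your argument is correct, and it takes a genuinely different (and somewhat more streamlined) route than the paper's. The paper first applies Lemma \ref{aai} with $q=a$ to form the quantity $Q = \E 1_{\n = 0\ (a)}\prod_i \lambda(\n+b_i)$, then dilates by $p$ and shifts by $j$, and at that point is holding the condition $\n+j\equiv 0\ (ap)$ together with an unwanted freedom in $\n$ modulo $a$; recovering the restriction to $\n\equiv 0\ (a)$ forces it to introduce the quantities $R(s)$ and run a fluctuation argument ($R(s+1)=R(s)+O(1/p)$, summed against $\sum_s R(s)$) before one final rescaling by $a$. You bypass all of that by observing that, once $a\mid j$ (and the indicator vanishes otherwise), the congruence $a\n+j\equiv 0\ (ap)$ is a \emph{single} residue condition on $\n$ modulo $p$ alone, so Lemma \ref{aai} with $q=p$ applies directly to each pair $(p,j)$, with the resulting bounded shift $c_j=O(1/\eps^2)$ absorbed by \eqref{ati}; the exact identity $\E\prod_i\lambda(pa\n+pb_i)=(-1)^k\E\prod_i\lambda(a\n+b_i)$ then guarantees a common sign over $p$ and $j$, and Mertens gives the $H/\log H$ gain. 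Your accounting of the error terms is also sound: the $O(H\cdot|{\mathcal P}_H|)$ accumulated $o_{\omega\to\infty}(1)$ errors are controlled because $\omega$ is chosen after $H_+$ in the hierarchy, the boundary loss in $j$ is $O(\eps^2 H/p)$ per prime and hence negligible against the main term $\asymp \eps H/(ap)$ once $\eps$ is small depending on $a$, and this is exactly how the paper disposes of the same boundary terms at the end of its proof. The only thing your write-up elides is the trivial check that the arguments of $\lambda$ remain positive integers after the dilation and shift, which is immediate.
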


\begin{proof}  Write
$$ Q := \E 1_{\n = 0\ (a)} \prod_{i=1}^k \lambda(\n+b_i),$$
then \eqref{ao} and Lemma \ref{aai} implies that $|Q| \gg \eps$.  For any prime $p$, we have $\lambda(p)=-1$, and hence from the complete multiplicativity of the Liouville function we have the identity
$$ 1_{\n = 0\ (a)} \prod_{i=1}^k \lambda(\n+b_i) = (-1)^k 1_{p\n = 0\ (ap)} \prod_{i=1}^k \lambda(p\n+pb_i)$$
and thus
$$ \E 1_{p\n = 0\ (ap)} \prod_{i=1}^k \lambda(p\n+pb_i) = (-1)^k Q.$$
Applying Lemma \ref{aai} and noting that $1_{\n=0\ (ap)} 1_{\n=0\ (p)} = 1_{\n=0\ (ap)}$, we conclude that
$$ \E 1_{\n = 0\ (ap)} \prod_{i=1}^k \lambda(\n+pb_i) = (-1)^k \frac{Q}{p} + o_{\omega \to \infty}(1).$$
for any prime $p \leq H$.  Shifting $\n$ by $j$ using another application of Lemma \ref{aai}, we conclude that
$$ \E 1_{\n+j = 0\ (ap)} \prod_{i=1}^k \lambda(\n+j+pb_i) = (-1)^k \frac{Q}{p} + o_{\omega \to \infty}(1).$$
for any prime $p \leq H$ and any $1 \leq j \leq H$.  Summing in $j$, we conclude (recalling that $\omega$ is assumed large compared with $H_+$ and hence $H$)
$$ \E \sum_{j=1}^H 1_{\n+j = 0\ (ap)} \prod_{i=1}^k \lambda(\n+j+pb_i) = (-1)^k \frac{HQ}{p} + o_{\omega \to \infty}(1).$$
If we now introduce the quantity
$$ R(s) = R_p(s) := \E \sum_{j=1}^H 1_{\n+j = 0\ (ap)} \prod_{i=1}^k \lambda(\n+j+pb_i) 1_{\n = s\ (a)}$$
for $s \in \Z/a\Z$, we therefore have
\begin{equation}\label{sumq}
 \sum_{s \in\Z/a\Z} R(s) = (-1)^k \frac{HX}{p} + o_{\omega \to \infty}(1).
\end{equation}
On the other hand, applying Lemma \ref{aai} with $\n$ shifted to $\n+1$, and then shifting $j$ by one, we have
$$ R(s+1) := \E \sum_{j=2}^{H+1} 1_{\n+j = 0\ (ap)} \prod_{i=1}^k \lambda(\n+j+pb_i) 1_{\n = s\ (a)}.$$
The difference between $\sum_{j=2}^{H+1} 1_{\n+j = 0\ (ap)} \prod_{i=1}^k \lambda(\n+j+pb_i) 1_{\n = s\ (a)}$ and $\sum_{j=1}^H 1_{\n+j = 0\ (ap)} \lambda(\n+j+pb_1) \dots \lambda(\n+j+pb_k) 1_{\n = s\ (a)}$ is zero with probability $1-O(1/p)$, and $O(1)$ on the remaining event.  Absorbing the $o_{\omega \to \infty}(1)$ error into the $O(1/p)$ error, we conclude that
$$ R(s+1) = R(s) + O\left(\frac{1}{p}\right)$$
for all $s \in \Z/a\Z$, so $R$ fluctuates by at most $O(a/p)$.  Combining this with \eqref{sumq}, we conclude in particular that
$$ R(0) = (-1)^k \frac{HQ}{ap} + O\left( \frac{a}{p} \right).$$
Summing over ${\mathcal P}_H$, we conclude that
\begin{align*}
&\E \sum_{j=1}^H \sum_{p \in {\mathcal P}_H} 1_{\n+j = 0\ (ap)} \prod_{i=1}^k \lambda(\n+j+pb_i) 1_{\n = 0\ (a)}\\
&\quad = \left((-1)^k \frac{HQ}{a} + O(a)\right) \sum_{p \in {\mathcal P}_H} \frac{1}{p}
\end{align*}
and hence by the prime number theorem and the lower bound $|Q| \gg \eps$, we have
$$
\left|\E \sum_{j=1}^H \sum_{p \in {\mathcal P}_H} 1_{\n+j = 0\ (ap)} \prod_{i=1}^k \lambda(\n+j+pb_i) 1_{\n = 0\ (a)}\right| \gg \eps \frac{H}{a \log H}.$$
Applying Lemma \ref{aai}, we obtain
$$
\left|\E \sum_{j=1}^H \sum_{p \in {\mathcal P}_H} 1_{a\n+j = 0\ (ap)} \prod_{i=1}^k \lambda(a\n+j+pb_1)\right| \gg \eps \frac{H}{\log H}.$$
If one of the $j+pb_i$ lie outside of $[1,H]$, then $j$ lies in either $[1,B\eps^2 H]$ or $[(1-B\eps^2)H,H]$, where $B := \max(|b_1|,\dots,|b_k|)$.  The contribution of these values of $j$ can be easily estimated to be $O( \frac{\eps^2 B H}{\log H} )$, which is negligible since $\eps$ was assumed small.  Discarding these contributions, we obtain the proposition.
\end{proof}

We rewrite the conclusion of Proposition \ref{cloc} as
\begin{equation}\label{efy}
 |\E F(\X_H, \Y_H)| \gg \eps \frac{H}{\log H}
\end{equation}
where $\X_H$ is the discrete random variable
$$ \X_H := ( \lambda(a\n + j) )_{j=1,\dots,H}$$
(taking values in $\{-1,+1\}^{H}$), $\Y_H$ is the discrete random variable
$$ \Y_H := \n\ (P_H)$$
(taking values in $\Z/P_H\Z$) with $P_H := \prod_{p \in H} p$, and $F\colon \{-1,+1\}^{H} \times \Z/P_H\Z \to \R$ is the function
\begin{equation}\label{yando}
F( (x_{j})_{j=1,\dots,H}, y\ (P_H) ) :=
\sum_{p \in {\mathcal P}_H} \sum_j 1_{ay + j = 0\ (ap)} \prod_{i=1}^k x_{j+pb_1} 
\end{equation}
with the convention that $x_j=0$ for $j \not \in [1,H]$.

Crucially, we can locate a scale $H$ in which $\X_H$ and $\Y_H$ have a weak independence property:

\begin{proposition}[Entropy decrement argument]  There exists a natural number $H$ between $H_-$ and $H_+$ which is a multiple of $a$, such that
$$ \I( \X_H, \Y_H ) \leq \frac{H}{\log H \log\log\log H},$$
where $\I(\X_H,\Y_H)$ denotes the mutual information between $\X_H$ and $\Y_H$ (see \cite[\S 3]{tao-chowla} for a definition).
\end{proposition}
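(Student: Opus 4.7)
The plan is to use the entropy decrement argument of \cite[\S 3]{tao-chowla}. I would fix a sufficiently long chain of scales $H_- = H_{(1)} < H_{(2)} < \cdots < H_{(M)} \leq H_+$, each a multiple of $a$, arranged so that the ratios $H_{(m+1)}/H_{(m)}$ are close to $1$ and the count $M$ is large enough, depending on $H_+/H_-$, that the sum $\sum_m (\log H_{(m)} \log\log\log H_{(m)})^{-1}$ can be made arbitrarily large. Assume for contradiction that $\I(\X_{H_{(m)}}, \Y_{H_{(m)}}) > H_{(m)} / (\log H_{(m)} \log\log\log H_{(m)})$ holds for every $m$; the goal is to convert this into a cumulative entropy deficit that eventually exceeds $\log 2$.

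The key ingredient, and the main obstacle, is the one-step entropy decrement. Using the approximate affine invariance of Lemma \ref{aai} together with the complete multiplicativity identity $\lambda(pn) = -\lambda(n)$ (already exploited in the proof of Proposition \ref{cloc}), one can show that the conditional distribution of $\X_H$ given $\Y_H = \n \pmod{P_H}$ is approximately that of a rescaled-and-shifted copy of the sequence itself, up to $o_{\omega \to \infty}(1)$ errors. Combining this with the Shannon identity $\HH(\X_H \mid \Y_H) = \HH(\X_H) - \I(\X_H, \Y_H)$ and a subadditivity estimate that compares $\HH(\X_{H_{(m+1)}})$ against $\HH(\X_{H_{(m)}})$ plus a handful of extra $\pm 1$ coordinates, one obtains a decrement of the form
\[
\frac{\HH(\X_{H_{(m+1)}})}{H_{(m+1)}} \leq \frac{\HH(\X_{H_{(m)}})}{H_{(m)}} - \frac{c}{\log H_{(m)} \log\log\log H_{(m)}} + o_{\omega \to \infty}(1)
\]
for some absolute $c > 0$. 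The technical difficulty lies in correctly aligning $\X_{H_{(m)}}$ and $\X_{H_{(m+1)}}$ so that the mutual information with $\Y_{H_{(m)}}$ actually produces a loss of entropy per unit length rather than merely redistributing information between coordinates; one must simultaneously track the range of primes $\mathcal P_H$ and absorb all slack into the $o_{\omega \to \infty}(1)$ error, which is precisely why $\omega$ was chosen so much larger than $H_+$.

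Summing these single-step decrements along the chain produces a total drop in the normalized entropy of magnitude $\gg \sum_m (\log H_{(m)} \log\log\log H_{(m)})^{-1}$, which for $H_+$ chosen large enough exceeds the range $[0, \log 2]$ in which $\HH(\X_H)/H$ must lie, since each coordinate of $\X_H$ takes values in $\{-1,+1\}$. This contradiction shows the assumed failure cannot persist at every scale in the chain, so some $H = H_{(m)} \in [H_-, H_+]$, a multiple of $a$, must satisfy the claimed mutual information bound.
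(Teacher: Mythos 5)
The paper's own ``proof'' of this proposition is just a citation of \cite[Lemma 3.2]{tao-chowla}, so what matters is whether your reconstruction of that argument is sound. It is not, for two concrete reasons. First, your choice of scales with $H_{(m+1)}/H_{(m)}$ close to $1$, compared via ``a handful of extra $\pm 1$ coordinates,'' cannot produce the claimed decrement: since $\HH(\X_{H'}) \geq \HH(\X_{H})$ for $H' \geq H$, the normalized entropy $\HH(\X_{H'})/H'$ can lie below $\HH(\X_{H})/H$ by at most $(1 - H/H')\log 2$, which for $H' - H = O(1)$ is $O(1/H)$ --- far smaller than the drop $c/(\log H \log\log\log H)$ that your displayed inequality asserts, so that inequality is simply false for such scales. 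The actual argument takes consecutive scales far apart (with $H_{j+1}/H_j \to \infty$; the separation is dictated by the need to absorb the entropy $\sum_{i<j} \log P_{H_i} \asymp \eps^2 \sum_{i<j} H_i$ of all the previous conditioning variables), decomposes $[1,H_{j+1}]$ into $\sim H_{j+1}/H_j$ blocks of length $H_j$, and uses subadditivity of Shannon entropy together with the approximate translation invariance \eqref{ati} to compare block entropies; the requirement that $\sum_j 1/(\log H_j \log\log\log H_j)$ still diverge under this forced growth of the $H_j$ is precisely where the triple logarithm in the statement comes from. Second, your decrement inequality is stated for the unconditional entropies $\HH(\X_H)$, which have no reason to decrease at all; the quantity that decrements is the conditional entropy $\HH(\X_{H_j} \mid \Y_{H_1},\dots,\Y_{H_j})/H_j$, the point being that conditioning on $\Y_{H_j}$ removes $\I(\X_{H_j},\Y_{H_j})$ from the entropy at scale $H_j$ and subadditivity then propagates this loss to every larger scale.

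There is also a misattribution of the ``key ingredient'': the identity $\lambda(pn) = -\lambda(n)$ and the $q=p$ case of Lemma \ref{aai} play no role in this proposition. The entropy decrement lemma is a purely information-theoretic pigeonhole statement, valid for the sign pattern of an arbitrary bounded sequence; it uses only translation invariance, the chain rule and subadditivity for entropy, and the trivial bounds $0 \leq \HH(\X_H) \leq H \log 2$ and $\HH(\Y_H) \leq \log P_H$. Multiplicativity is what makes the \emph{conclusion} useful --- it enters in Proposition \ref{cloc} and in the subsequent Hoeffding step, where the smallness of the mutual information justifies replacing $1_{a\n+j = 0\ (ap)}$ by its average over residue classes --- but it is not an ingredient of the proof of this lemma. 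Your overall skeleton (assume failure at every scale, accumulate decrements, contradict $0 \leq \HH/H \leq \log 2$) is correct, but the one-step decrement, which you yourself identify as the main obstacle, is exactly the step your sketch gets wrong.
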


\begin{proof} See \cite[Lemma 3.2]{tao-chowla}.
\end{proof}

Let $H$ be as in the above proposition.  Repeating the derivation of \cite[(3.16)]{tao-chowla} (using in particular the Hoeffding concentration inequality \cite{hoeff}) almost verbatim, we may now conclude from \eqref{efy} that
$$ \left| \E \frac{1}{P_H} \sum_{y \in \Z/P_H\Z} F( \X_H, y ) \right| \gg \eps \frac{H}{\log H}.$$
But from the Chinese remainder theorem and \eqref{yando}, the left-hand side can be written as
$$ \left| \E \sum_{p \in {\mathcal P_H}} \frac{1}{p} \sum_{j = 0\ (a)} \prod_{i=1}^k \lambda(a \n + j + pb_i) 1_{[1,H]}(j+pb_i) \right|.$$
Writing $1 = \frac{\log p}{\log H} + O_\eps( \frac{1}{\log H} )$ and discarding the error term by the triangle inequality and prime number theorem, we thus have
$$ \left| \E \sum_{p \in {\mathcal P_H}} \frac{\log p}{p} \sum_{j = 0\ (a)} \prod_{i=1}^k \lambda(a \n + j + pb_i) 1_{[1,H]}(j+pb_i) \right| \gg \eps H.$$
If we let $\Lambda$ denote the von Mangoldt function, we thus have
\begin{equation}\label{aaa}
 \left| \E \sum_{\frac{\eps^2}{2} H \leq m \leq \eps^2 H} \frac{\Lambda(m)}{m} \sum_{j = 0\ (a)} \prod_{i=1}^k \lambda(a \n + j + pb_i) 1_{[1,H]}(j+pb_i)  \right| \gg \eps H,
\end{equation}
since the contribution of those $m$ which are powers of primes, rather than primes, is easily seen to be negligible.

It is now convenient to use the ``$W$-trick'' from \cite{gt-longaps}.  We recall the parameter $w$ introduced (but not yet used) at the beginning of the argument.  We set
$$ W := \prod_{p \leq w} p$$
and observe that the contribution to \eqref{aaa} of those $m$ that share a common factor with $W$ is negligible.  Discarding these terms and applying the pigeonhole principle, we conclude the existence of a natural number $1 \leq r \leq W$ coprime with $W$, such that
\begin{align*}
&\left| \E \sum_{\frac{\eps^2}{2} H \leq m \leq \eps^2 H} \frac{\Lambda(m) 1_{m = r\ (W)}}{m} \sum_{j = 0\ (a)} \prod_{i=1}^k \lambda(a \n + j + mb_i) 1_{[1,H]}(j+mb_i) \right|\\
&\quad \gg \eps \frac{H}{\phi(W)},
\end{align*}
where $\phi(W)$ is the Euler totient function of $W$.  Making the substitution $m = Wm' + r$, and discarding some negligible error terms, we conclude that
\begin{align*}
&\left| \E \sum_{\frac{\eps^2}{2}\frac{H}{W} \leq m \leq \eps^2 \frac{H}{W}} \frac{\Lambda(Wm+r)}{W m} \sum_{j = 0\ (a)} \prod_{i=1}^k \lambda(a \n + j + (Wm+r)b_i) 1_{[1,H]}(j+(Wm+r)b_i) \right| \\
&\quad\gg \eps \frac{H}{\phi(W)},
\end{align*}
so if we define
$$ \Lambda_{W,r}(m) := \frac{\phi(W)}{W} \Lambda(Wm+r)$$
then
\begin{equation}\label{thumb}
\begin{split}
&\left| \E \sum_{\frac{\eps^2}{2} \frac{H}{W} \leq m \leq \eps^2\frac{H}{W}} \frac{\Lambda_{W,r}(m)}{m} \sum_{j = 0\ (a)} \prod_{i=1}^k \lambda(a \n + j + (Wm+r)b_i) 1_{[1,H]}(j+(Wm+r)b_i)\right| \\
&\quad\gg \eps H.
\end{split}
\end{equation}
We now replace $\Lambda_{W,r}$ by $1$.  Manipulations of this form have appeared in \cite{fhk}, \cite{wz}; we will use an argument somewhat similar to that in \cite{fhk}:

\begin{proposition}[Elmination of von Mangoldt weight]  We have
\begin{align*}
& \E \sum_{\frac{\eps^2}{2} \frac{H}{W} \leq m \leq \eps^2 \frac{H}{W}} \frac{\Lambda_{W,r}(m)-1}{m} \sum_{j = 0\ (a)}
\prod_{i=1}^k \lambda(a \n + j + (Wm+r)b_i) 1_{[1,H]}(j+(Wm+r)b_i) \\
&\quad = o_{w \to \infty}(H).
\end{align*}
\end{proposition}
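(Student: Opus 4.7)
The plan is to recognise the displayed expression, for each realisation of $\n$, as a multilinear average in two integer variables $(j,m)$ against a system of $k+1$ pairwise linearly independent affine-linear forms, and then invoke the generalized von Neumann theorem of Green and Tao \cite{gt-primes} together with the known Gowers-norm bound $\|\Lambda_{W,r}-1\|_{U^s} = o_{w \to \infty}(1)$ for the $W$-tricked von Mangoldt function.

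First I would note that since the summand is bounded pointwise by $O(1)$ in $\n$, it suffices to show the uniform deterministic bound
$$ \left|\sum_{\frac{\eps^2}{2}\frac{H}{W} \leq m \leq \eps^2\frac{H}{W}} \frac{\Lambda_{W,r}(m)-1}{m} \sum_{j = 0\ (a)} \prod_{i=1}^k \lambda(N + j + (Wm+r)b_i) 1_{[1,H]}(j+(Wm+r)b_i)\right| = o_{w\to\infty}(H) $$
uniformly in $N \in \Z$ (taking $N = an_0$ for each possible value $n_0$ of $\n$). A dyadic decomposition of the $m$-range into $O(1)$ intervals $[M,2M]$, together with replacing $1/m$ by $1/M$ (at the cost of an absolute constant), reduces matters to showing for each such scale $M \asymp \eps^2 H/W$ that
$$ \frac{1}{M}\left|\sum_{M \leq m \leq 2M}(\Lambda_{W,r}(m)-1)\sum_{j = 0\ (a)}\prod_{i=1}^k F_i(j+(Wm+r)b_i)\right| = o_{w\to\infty}(H), $$
where $F_i(n) := \lambda(N+n) 1_{[1,H]}(n)$ is $1$-bounded and supported on $[1,H]$.

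The inner sum is now a linear average over the box $\{(j,m) \in \Z \times [M,2M]\}$ against $k+1$ affine-linear forms: the weight $\Lambda_{W,r}-1$ is attached to the form $m$, while each $F_i$ is attached to the form $j+(Wm+r)b_i$. Because the $b_i$ are distinct, these $k+1$ forms are pairwise non-proportional, so the system has finite Cauchy-Schwarz complexity $s = s(k)$. Applying the generalized von Neumann theorem of \cite{gt-primes} (against a suitable pseudorandom majorant for $\Lambda_{W,r}$, whose linear-forms and correlation conditions are guaranteed precisely by the $W$-trick), the double sum is bounded by
$$ \ll MH \cdot \|\Lambda_{W,r}-1\|_{U^{s+1}([M,2M])}. $$
Dividing by $M$ and invoking the Gowers-norm bound $\|\Lambda_{W,r}-1\|_{U^{s+1}([M,2M])} = o_{w\to\infty}(1)$ from \cite{gt-primes} (resting on the inverse theorem \cite{gtz} and the M\"obius-nilsequences theorem \cite{gt-mobius}), which applies since $M \gg H_-/W$ is enormous compared to $w$ in our hierarchy, yields a bound of $o_{w\to\infty}(H)$ per dyadic scale. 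Summing over the $O(1)$ scales completes the proof.

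The main technical obstacle I anticipate is arranging the generalized von Neumann input so that the sharp indicators $1_{[1,H]}$ do not spoil the linear-forms hypothesis. The standard remedy is to absorb $1_{[1,H]}$ into the $1$-bounded factor $F_i$ (since the Green-Tao generalized von Neumann theorem accepts arbitrary $1$-bounded functions on the non-$\Lambda_{W,r}$ sides), or alternatively to replace the indicator by a smooth majorant at negligible cost. With this in place, the computation is essentially the same as in \cite{fhk}, \cite{wz}.
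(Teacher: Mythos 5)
Your overall strategy coincides with the paper's: reduce to a deterministic estimate with the congruence $j = 0\ (a)$ and the indicators $1_{[1,H]}$ absorbed into $1$-bounded functions, eliminate those bounded functions by a generalized von Neumann / Gowers--Cauchy--Schwarz argument so that only a Gowers-norm-type correlation of $\Lambda_{W,r}-1$ survives, and finish with the Green--Tao linear-equations-in-primes input. The paper does not invoke the generalized von Neumann theorem as a packaged black box with a pseudorandom majorant; instead it reparametrizes $m = m_1+\dots+m_k$, $j = n - Wm_1b_1-\dots-Wm_kb_k$ and performs the $k$ Cauchy--Schwarz steps by hand (citing \cite[(B.7)]{gt-primes}), which is possible without any majorant because the functions being eliminated are $1$-bounded and the unbounded weight merely gets duplicated into a $2^k$-fold parallelepiped correlation. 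This is a presentational rather than substantive difference.

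There is, however, one step in your write-up that would fail as stated: the claim that one may replace the weight $1/m$ by $1/M$ ``at the cost of an absolute constant.'' That replacement is only valid for sums of non-negative terms; here the summand is signed, so changing the weight pointwise by a bounded factor does not control the sum. Moreover the weight $m \mapsto M/m$ is a function of the same linear form that carries $\Lambda_{W,r}-1$, so it cannot be absorbed into any of the $1$-bounded factors $F_i$. This is not a cosmetic point: it is precisely why the paper, after the Cauchy--Schwarz steps, is left with the weighted quantity $c_m = (\Lambda_{W,r}(m)-1)/m$ in its parallelepiped correlation and must then use a Riemann-sum approximation together with \emph{localized} correlation estimates for $\Lambda_{W,r}$ over intervals of length $H\log^{-10}H$ (from \cite[(A.9)]{fgkt}) rather than the plain Gowers-uniformity statement of \cite{gt-primes}. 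Your route can be repaired in a standard way --- e.g.\ Fourier-expand the smooth weight $M/m$ on the dyadic range into absolutely summable linear phases, which do not change $U^{s+1}$ norms for $s \geq 1$, or carry the weight through and then invoke the localized estimates as the paper does --- but as written this step is a genuine gap in the justification.
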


\begin{proof}  By the triangle inequality, it suffices to show the deterministic estimate
$$
\sum_{\frac{\eps^2}{2} \frac{H}{W} \leq m \leq \eps^2 \frac{H}{W}} \frac{\Lambda_{W,r}(m)-1}{m} \sum_j \prod_{i=1}^k f_i(j + (Wm+r)b_i) = o_{w \to \infty}(H)$$
for any functions $f_1,\dots,f_k\colon \Z \to [-1,1]$ supported on $[1,H]$ (note that the constraint $j=0\ (a)$ can be absorbed into (say) the $f_1$ factor). By shifting each $f_i$ by $rb_i$ (and restricting back to $[1,H]$ at the cost of a negligible error), we may replace each term $f_i(j+(Wm+r)b_i)$ here by $f_i(j+Wmb_i)$.

By embedding $[1,H]$ into $\Z/2H\Z$ and extending functions by zero, it suffices to show that
$$
\E_{j,m \in \Z/2H\Z} c_m \prod_{i=1}^k f_i(j + Wmb_i) = o_{w \to \infty}(1)$$
for any functions $f_1,\dots,f_k\colon \Z/2H\Z \to [-1,1]$, where $c_m := \frac{\Lambda_{W,r}(m)-1}{m}$ if $m$ is an integer between $\frac{\eps^2}{2} \frac{H}{W}$ and $\eps^2 \frac{H}{W}$ (identified with an element of $\Z/2H\Z$), and $c_m=0$ otherwise, and we use the averaging notation $\E_{n \in A} f(n) := \frac{1}{|A|} \sum_{n \in A} f(n)$.  
Making the substitution $m = m_1+\dots+m_k$ and $j = n - Wm_1 b_1 - \dots - Wm_k b_k$, we reduce to showing that
$$
\E_{n,m_1,\dots,m_k \in \Z/2H\Z} c_{m_1+\dots+m_k} \prod_{i=1}^k F_i(n,m_1,\dots,m_k) = o_{w \to \infty}(1),$$
where $F_i \colon \Z^{k+1} \to [-1,1]$ is the function
$$ F_i(n,m_1,\dots,m_k) := f_i\left(n + \sum_{j=1}^k W m_j (b_j - b_i)\right).$$
Observe that for each $i=1,\dots,k$, $F_i$ does not depend on the $m_i$ variable.  Applying the triangle inequality in $n$ and the Cauchy-Schwarz inequality $k$ times (as in \cite[(B.7)]{gt-primes}), we see that it suffices to show that
$$
\E_{m^{(0)}_1,\dots,m^{(0)}_k,m^{(1)}_1,\dots,m^{(1)}_k \in \Z/2H\Z} \prod_{\vec \omega \in \{0,1\}^k} c_{\sum_{i=1}^k m^{(\omega_i)}_i} = o_{w \to \infty}(1)$$
where $\vec \omega = (\omega_1,\dots,\omega_k)$.
Writing $h_i := m^{(1)}_i-m^{(0)}_i$ and $x := m^{(0)}_1+\dots+m^{(0)}_k$, we can rewrite the left-hand side as
$$ 
\E_{x,h_1,\dots,h_k \in \Z/2H\Z} \prod_{\vec \omega \in \{0,1\}^k} c_{x + \vec \omega \cdot \vec h},$$
where $\vec \omega \cdot \vec h := \omega_1 h_1 + \dots + \omega_k h_k$,
so by definition of $c_m$, it suffices to show that
$$ \sum_{x,h_1,\dots,h_k \in \Z} \prod_{\vec \omega \in \{0,1\}^k} 1_{\frac{\eps^2}{2} \frac{H}{W} \leq x+\vec \omega \cdot \vec h \leq \eps^2 \frac{H}{W}} \frac{\Lambda_{W,r}(x+\vec \omega \cdot \vec h)-1}{x+\vec \omega \cdot \vec h} = o_{w \to \infty}(H^{k+1}).$$
Using a Riemann sum approximation, it suffices to show that
$$ \sum_{x \in I,h_1 \in J_1,\dots,h_k \in J_k} \prod_{\vec \omega \in \{0,1\}^k} (\Lambda_{W,r}(x+\vec \omega \cdot \vec h)-1) = o_{w \to \infty}((H \log^{-10} H)^{k+1})$$
for all intervals $I,J_1,\dots,J_k \subset [1,H]$ of length $H \log^{-10} H$ (say).  But this follows from the results in \cite{gt-primes}, or more precisely from the localised estimate in \cite[(A.9)]{fgkt}.
\end{proof}

From \eqref{thumb}, the above proposition, and the triangle inequality, we have
$$
\left| \E \sum_{\frac{\eps^2}{2} \frac{H}{W} \leq m \leq \eps^2 \frac{H}{W}} \frac{1}{m} \sum_{j=0\ (a)} \prod_{i=1}^k \lambda(a \n + j + (Wm+r)b_i) 1_{[1,H]}(j+(Wm+r)b_i) \right| \gg \eps H.
$$
Since the expression inside the summation is $O(H)$, we conclude that with probability $\gg_\eps 1$, one has
\begin{equation}\label{will}
\left|\sum_{\frac{\eps^2}{2} \frac{H}{W} \leq m \leq \eps^2 \frac{H}{W}} \frac{1}{m} \sum_{j=0\ (a)} \prod_{i=1}^k \lambda(a \n + j + (Wm+r)b_i) 1_{[1,H]}(j+(Wm+r)b_i) \right| \gg_\eps H.
\end{equation}
Let us condition to this event.  Using our hypothesis that Conjecture \ref{lgi-log} holds (in the form \eqref{loggow-equiv}), together with Markov's inequality, we see that with conditional probability $1 - o_{H \to \infty}(1)$ one also has
\begin{equation}\label{lam}
 \| \lambda \|_{U^{k-1}([a\n,a\n+H] \cap \Z)}= o_{H \to \infty}(1),
\end{equation}
and we condition to this event also.

Replacing $m$ by $Wm+r$, and dropping some negligible boundary terms, we see from \eqref{will} that
$$
\left| \sum_{\frac{\eps^2}{2} H \leq m \leq \eps^2 H} \frac{1_{m = r\ (W)}}{m} \sum_{j = 0\ (a)} \prod_{i=1}^k \lambda(a \n + j + mb_i) 1_{[1,H]}(j+mb_i)\right | \gg_{\eps,W} H.
$$
Since $m=r\ (W)$, and $W$ is a multiple of $a$, we can write $1_{j=0\ (a)}$ as $1_{a\n + j + mb_k = rb_k\ (a)}$.  As $b_1=0$, we may thus write the above estimate in the form
$$
\left| \sum_{\frac{\eps^2}{2} H \leq m \leq \eps^2 H} \frac{1_{m = r\ (W)}}{m} \sum_{j} f_1(j) f_2(j+mb_2) \dots f_k(j+mb_k) \right|
 \gg_{\eps,W} H
$$
for some ($\n$-dependent) functions $f_1, f_2,\dots,f_k\colon \Z \to [-1,1]$ supported on $[1,H]$, with $f_1(j) := \lambda(a\n + j) 1_{[1,H]}(j)$ (the precise values of $f_2,\dots,f_k$ will not be relevant).  Note from \eqref{lam} that
\begin{equation}\label{kak}
 \|f_1\|_{U^{k-1}([1,H] \cap \Z)} = o_{H \to \infty}(1).
\end{equation}
We now dispose of the $m$ weights.  Note that the quantity $ f_1(j) f_2(j+mb_2) \dots f_k(j+mb_k) $ is only non-vanishing when $m=O(H)$, so we may embed the $m$ variable in (say) $\Z/HW\Z$.   We can Fourier expand $m \mapsto 1_{m = r\ (W)}$ into a linear combination of exponential phases $m \mapsto e(sm/W)$ with $s=1,\dots,W$ and coefficients of size $O(1)$.  Similarly, using a standard Fourier expansion (e.g. using\footnote{Alternatively, one can perform a Fourier series expansion of $1_{\frac{\eps^2}{2W} \leq x \leq \frac{\eps^2}{W}} \frac{1}{x}$ on the unit circle.} Fej\'er kernels), one can approximate
$m \mapsto 1_{\frac{\eps^2}{2} H \leq m \leq \eps^2 H} \frac{1}{m}$ on $\Z/HW\Z$ by a linear combination of $O_{\eps,\delta}(1)$ exponential phases $m \mapsto e(sm/HW)$ with $s=1,\dots,H$ and coefficients $O_{\eps,\delta,W}(1/H)$, plus an error whose $\ell^1(\Z/HW\Z)$ norm in $m$ is at most $\delta$, for any given $\delta>0$.  Applying these expansions for $\delta>0$ sufficiently small depending on $\eps,W$, and using the pigeonhole principle, we conclude that
$$
| \sum_m e(sm/HW) \sum_{j} f_1(j) f_2(j+mb_2) \dots f_k(j+mb_k) |
 \gg_{\eps,W} H^2
$$
for some integer $s$, where we now revert to $m$ as taking values in $\Z$ rather than $\Z/HW\Z$.  To deal with the phase $e(sm/HW)$, we write $m$ as a linear combination of $j+mb_{k-1}$ and $j+mb_k$, and conclude (using our assumption $k > 2$) that
$$
| \sum_m \sum_{j} f_1(j) f'_2(j+mb_2) \dots f'_k(j+mb_k) | \gg_{\eps,W} H^2
$$
for some functions $f'_2,\dots,f'_k\colon \Z \to \C$ supported on $[1,H]$ and bounded in magnitude by $1$.  But from the ``generalised von Neumann inequality'' (see e.g. \cite[Lemma 11.4]{tao-vu}, after embedding $[1,H]$ in a cyclic group $\Z/p\Z$ of some prime $p$ between $2H$ and $4H$, say) we have
$$
| \sum_m \sum_{j} f_1(j) f'_2(j+mb_2) \dots f'_k(j+mb_k) | \ll \|f_1\|_{U^{k-1}([1,H] \cap \Z)} 
$$
giving a contradiction to \eqref{kak}.  This concludes the derivation of Conjecture \ref{chow-log} from Conjecture \ref{lgi-log}.

\begin{remark}\label{kch}  An inspection of the above argument shows that if one wishes to establish Conjecture \ref{chow-log} for a specific choice of $k \geq 3$, then it would suffice to establish Conjecture \ref{lgi-log} for $d=k-1$.  In particular, the first open case $k=3$ of Conjecture \ref{chow-log} would follow from a non-trivial bound on the local $U^2$ norms of the Liouville function.
\end{remark}

\begin{remark}  In the spirit of the Elliott conjecture \cite{elliott} (see also \cite{mrt} for a correction to that conjecture), one could more generally consider estimates of the form
$$ \sum_{x/\omega \leq n \leq x} \frac{g_1(n + h_1) \dots g_k(n + h_k)}{n} = o_{\omega \to \infty}(\log \omega) $$
for bounded completely multiplicative functions $g_1,\dots,g_k$.  The weight $\Lambda(m)$ appearing in the above analysis would now be replaced by $\Lambda g_1 \dots g_k(m)$, and so the results on linear equations in primes used in Proposition \ref{cloc} are no longer available.  Nevertheless, one should still be able to deploy a ``transference principle'' to approximate the weight $\Lambda g_1 \dots g_k$ by a small number of ``structured'' functions (such as nilsequences), which should still allow one to derive a suitable generalisation of Conjecture \ref{chow-log} for the $g_1,\dots,g_k$ from Conjecture \ref{lgi-log} (possibly after increasing $d$ to $k$ instead of $k+1$), in the spirit of \cite[Theorem 1.3]{tao-chowla} (which used a ``restriction theorem for the primes'' as a proxy for the transference principle).  We will not pursue this matter here.
\end{remark}

\section{Applying the inverse conjecture for the Gowers norms}\label{gow-sec}

In this section we show how Conjecture \ref{lgi-log} can be deduced from Conjecture \ref{lnc-log}.

Let $d \geq 1$, let $\eps>0$ be sufficiently small depending on $d$, and let $2 \leq H \leq \omega \leq X$ be such that $H$ is sufficiently large depending on $d,\eps$.  We allow implied constants to depend on $d$.
Using the formulation \eqref{loggow-equiv}, our goal is now to show that
$$ \E \| \lambda \|_{U^d([\n,\n+H] \cap \Z)} \ll \eps.$$
Suppose this claim failed, then we must have
\begin{equation}\label{do}
 \| \lambda \|_{U^d([\n,\n+H] \cap \Z)} \gg \eps
\end{equation}
with probability $\gg \eps$.

Suppose that we are in the event that \eqref{do} holds.  Then, by the inverse conjecture for the Gowers norms (\cite[Theorem 1.3]{gtz}), there exists a $d-1$-step (random) nilmanifold ${\mathbf G}/{\mathbf \Gamma}$ from a finite list ${\mathcal M}_{d-1,\eps}$ (each of which is equipped with a smooth Riemannian metric), and a (random) function ${\mathbf F}\colon {\mathbf G}/{\mathbf \Gamma} \to \C$ with Lipschitz constant $O_\eps(1)$ and a random group element $\g \in G$ and random base point $\x_0 \in {\mathbf G}/{\mathbf \Gamma} \to \C$
such that
\begin{equation}\label{lfg}
 |\sum_{h=1}^H \lambda(\n+h) {\mathbf F}( \g^h \x_0 )| \gg_\eps 1.
\end{equation}

By the pigeonhole principle, one can find a \emph{deterministic} $d-1$-step nilmanifold $G/\Gamma$ such that ${\mathbf G}/{\mathbf \Gamma}$ is equal to $G/\Gamma$ with probability $\gg_\eps 1$.  We condition to this event.  Next, we fix a deterministic base point $x_0$ in $G/\Gamma$.  For the random base point $\x_0$, we can write $\x_0 = \g_1 x_0$ for some bounded element $\g_1 \in G$.  We can then write
$$ {\mathbf F}( \g^h \x_0 ) = {\mathbf F}( \g_1 (\g_1^{-1} \g \g_1)^h x_0 ).$$
Replacing $\g$ by $\g_1^{-1} \g \g_1$ and ${\mathbf F}$ by the function $x \mapsto {\mathbf F}(\g_1 x)$, we see that we may assume without loss of generality that $\x_0 = x_0$.  Finally, by the Arzela-Ascoli theorem, the class of Lipschitz functions from $G/\Gamma$ to $\C$ of Lipschitz constant $O_\eps(1)$ is totally bounded in the uniform topology.  Thus, we can restrict the range of possible values of the random function ${\mathbf F}$ to a finite collection of $O_\eps(1)$ deterministic Lipschitz functions without significantly affecting \eqref{lfg}.  By the pigeonhole principle, we can thus find a deterministic Lipschitz function $F\colon G/\Gamma \to \C$ such that
$$
 \left|\sum_{h=1}^H \lambda(\n+h) F( \g^h x_0 )\right| \gg_\eps 1
$$
with probability $\gg_\eps 1$.  In particular,
$$
 \sup_{g \in G} \left|\sum_{h=1}^H \lambda(\n+h) F( g^h x_0 )\right| \gg_\eps 1
$$
with probability $\gg_\eps 1$, which implies that
$$ \E  \sup_{g \in G} \left|\sum_{h=1}^H \lambda(\n+h) F( g^h x_0 )\right| \gg_\eps 1.$$
But this contradicts Conjecture \ref{lnc-log} (in the form \eqref{lognil-equiv}).

\begin{remark}  An inspection of the above argument shows that in order to prove Conjecture \ref{lgi-log} for a specific choice of $d \geq 2$, it suffices to establish Conjecture \ref{lnc-log} with $s=d-1$.  Combining this with Remark \ref{kch}, we see that to establish Conjecture 
\ref{chow-log} for a specific choice of $k \geq 3$, it suffices to establish Conjecture \ref{lnc-log} with $s=d-1$.  In particular, and after performing a Fourier expansion of $1$-step nilsequences $n \mapsto F( g^n x_0 )$, we see that to prove the $k=3$ case of Conjecture \ref{chow-log}, it will suffice to establish the bound
$$ \sum_{X/\omega \leq n \leq X} \frac{\sup_{\alpha \in \R/\Z} |\sum_{h=1}^H \lambda(n+h) e(h\alpha)|}{n}
 = o_{H \to \infty}(H \log \omega).$$
for all $1 \leq H \leq \omega \leq x$.  Bounds of this form are available for very large values of $H$; for instance, the estimates in \cite{zhan} give this bound when $\omega > 1$ is fixed and $H \geq x^{5/8+\eps}$ for any fixed $\eps>0$.  In \cite{mrt} a weaker version of this estimate was established in which $\omega > 1$ is fixed and the supremum in $\alpha$ was outside the summation in $n$.
\end{remark}

\begin{remark}  One can reverse the above arguments, using \cite[Proposition 12.6]{gt-inverseu3} in place of \cite[Theorem 1.3]{gtz}, to show directly that Conjecture \ref{lgi-log} implies Conjecture \ref{lnc-log}; we leave the details of this implication to the interested reader.  This implication of course already follows from the arguments used to prove other components of Theorem \ref{main} in this paper, but this alternate argument is also valid in the absence of logarithmic averaging.
\end{remark}

\section{Constructing a deterministic sequence}\label{detsec}

In this section we show that Conjecture \ref{lnc-log} follows from Conjecture \ref{sar-log}.

Let $s, G/\Gamma$, $x_0$, $F$ be as in Conjecture \ref{lnc-log}; we allow all implied constants to depend on these quantities. By splitting in to real and imaginary parts we may take $F$ to be real-valued.  Let $\eps > 0$.  Our task is to show that
$$
 \sum_{X/\omega \leq n \leq X} \frac{\sup_{g \in G} |\sum_{h=1}^H \lambda(n+h) F( g^{h} x_0 )|}{n} \ll \eps H \log \omega
$$
whenever $1 \leq H \leq \omega \leq X$, and $H$ is sufficiently large depending on $\eps$.  
From\footnote{This result is stated for the M\"obius function in place of the Liouville function, but the arguments extend to the Liouville case; see \cite[\S 6]{gt-mobius}.} \cite[Theorem 1.1]{gt-mobius}, we see that
$$ \sup_{g \in G} \left|\sum_{h=1}^H \lambda(n+h) F( g^{h} x_0 )\right| = o_{H \to \infty}( H )$$
whenever $n \leq H \log H$ (say); in fact the results in \cite{gt-mobius} allow one to improve upon the trivial bound of $O(H)$ by an arbitrary fixed power of $\log H$.  Thus the net contribution of the case $n \leq H \log H$ to \eqref{snam} is negligible, so we may restrict to the case $n > H \log H$.  In this regime, one has $\frac{1}{n+h} = \frac{1}{n} + O( \frac{1}{\log H} \frac{1}{n} )$; the contribution of the error term is negligible (cf. \eqref{ati}), so it suffices to show that
$$
 \sum_{H\log H, X/\omega \leq n \leq X} \sup_{g \in G} \left|\sum_{h=1}^H \frac{\lambda(n+h)}{n+h} F( g^{h} x_0 )\right| \ll \eps H \log \omega
$$

It will suffice to just establish the positive part
\begin{equation}\label{snam}
 \sum_{H\log H, X/\omega \leq n \leq X} \sup_{g \in G} \max\left(\sum_{h=1}^H \frac{\lambda(n+h)}{n+h} F( g^{h} x_0 ),0\right) \ll \eps H \log \omega
\end{equation}
of this estimate, since the full estimate then follows by applying \eqref{snam} for both $F$ and $-F$ and using the triangle inequality.

Suppose for contradiction that the bound \eqref{snam} failed.  Then we can find sequences $H_i, \omega_i, X_i$ with
$$ 1 \leq H_i \leq \omega_i \leq X_i$$
and $H_i \to \infty$ as $i \to \infty$, such that
\begin{equation}\label{nsd}
 \sum_{H_i \log H_i, X_i/\omega_i \leq n \leq X_i} \sup_{g \in G} \max\left(\sum_{h=1}^{H_i} \frac{\lambda(n+h)}{n+h} F( g^{h} x_0 ),0\right) \gg \eps H_i \log \omega_i.
\end{equation}
By sparsifying the sequences $H_i,\omega_i,X_i$ we may assume that
\begin{equation}\label{dead}
 H_{i+1} \geq 100 X_i
\end{equation}
(say) for all $i$.

The quantity $\sup_{g \in G} |\sum_{h=1}^{H_i} \frac{\lambda(n+h)}{n+h} F( g^{h} x_0 )|$ is bounded above by $O(H_i/n)$.  Thus we can find a set $S_i$ of numbers $n$ with $H_i \log H_i, X_i/\omega_i \leq n \leq X_i$ such that
$$ \sum_{n \in S_i} \frac{1}{n} \gg \eps \log \omega_i $$
and such that
\begin{equation}\label{Fail}
 \sup_{g \in G} \sum_{h=1}^{H_i} \frac{\lambda(n+h)}{n+h} F( g^{h} x_0 ) \gg \eps \frac{H_i}{n} 
\end{equation}
for all $n \in S_i$, since the contribution to the left-hand side \eqref{nsd} of those $n$ for which \eqref{Fail} fails can be made to be significantly smaller than the right-hand side of \eqref{nsd} by choosing the implicit constants appropriately.

By a greedy algorithm, we can then find a subset $S'_i$ of $S_i$ that is $H_i$-separated (that is to say, $|n-m| \geq H_i$ for any distinct $n,m \in S'_i$) such that
\begin{equation}\label{song}
 \sum_{n \in S'_i} \frac{1}{n} \gg \frac{\eps}{H_i} \log \omega_i.
\end{equation}
For each $n \in S'_i$, we can find a group element $g_n \in G$ such that
\begin{equation}\label{sing}
 \sum_{h=1}^{H_i} \frac{\lambda(n+h)}{n+h} F( g_n^{h} x_0 ) \gg \eps \frac{H_i}{n}.
\end{equation}
If we now set $f\colon \Z \to \R$ to be the function defined by setting
$$ f(n+h) := F( g_n^{h} x_0 ) $$
whenever $n \in S'_i$ and $1 \leq h \leq H_i$ for some $i$, and $f(m)=0$ for all other $m$, we see that $f$ is well-defined because all the intervals $\{ n+1,\dots,n+H_i\}$ with $n \in S'_i$ and $i \geq 1$ are disjoint, thanks to \eqref{dead} and the $H_i$-separation of the $S'_i$.  

Summing \eqref{sing} over all $n \in S'_i$ and using \eqref{song}, we conclude that
$$ \sum_{H_i \log H_i, X_i/\omega_i \leq n \leq 2 X_i} \frac{\lambda(n)}{n} f(n) \gg \eps^2 \log \omega_i.$$
On the other hand, if $f$ is deterministic, then Conjecture \ref{sar-log} gives
$$ \sum_{H_i \log H_i, X_i/\omega_i \leq n \leq 2 X_i} \frac{\lambda(n)}{n} f(n) = o_{\omega_i \to \infty}(\log \omega_i)$$
(one can divide here into two cases, depending on whether $\log \frac{2 X_i}{H_i \log H_i}$ is smaller than (say) $\sqrt{\log \omega_i}$ or not).  Thus it will suffice to show that the sequence $f$ is deterministic.

Since $F$ is bounded, $f$ takes values in a compact interval $[-C,C]$.  Consider the compact space
$$ [-C,C]^\Z = \{ (y_n)_{n \in \Z}: y_n \in [-C,C] \forall n \in \Z \}$$
which we endow with the shift
$$ T( y_n)_{n \in \Z} := (y_{n+1})_{n \in \Z}$$
and metric
$$ d( (x_n)_{n \in \Z}, (y_n)_{n \in \Z} ) := \sup_{n \in \Z} 2^{-|n|} |x_n-y_n|.$$
We can identify $f$ with a point $y_0 := (f(n))_{n \in \Z}$ in $[-C,C]^\Z$.  We let $Y = \overline{\{ T^n y_0: n \in \Z\}}$ be the orbit closure of $y_0$ in $[-C,C]^\Z$, then $(Y,T)$ is a topological dynamical system.  If we let $F_0\colon Y \to \R$ be the function
$$ F_0((y_n)_{n \in \Z}) := y_0$$
then $F_0$ is continuous and 
$$ f(n) = F_0(T^n y_0)$$
for all $n \in \Z$.  Thus, to show that $f$ is deterministic, it suffices to show that $(Y,T)$ has zero topological entropy.  That is to say, for any fixed $\eps>0$ and any sufficiently large $N$, we should be able to cover $Y$ by at most $\exp( O(\eps N) )$ balls of radius $O(\eps)$ in the metric
$$ d_N( x, y ) := \max_{0 \leq i \leq N} d( T^i x, T^i y)$$
or equivalently
$$ d_N( (x_n)_{n \in \Z}, (y_n)_{n \in \Z} ) = \sup_{n \in \Z} 2^{- \max( -n, 0, n-N )} |x_n-y_n|.$$

Observe that if two sequences $(x_n)_{n \in \Z}, (y_n)_{n \in \Z}$ are such that $x_n=y_n+O(\eps)$ for all $-N \leq n \leq 2N$, then (for $N$ sufficiently large depending on $\eps$) we have $d_N( (x_n)_{n \in \Z}, (y_n)_{n \in \Z} )$.  Thus it suffices to find a collection ${\mathcal S}_{\eps,N}$ of finite sequences $(x_h)_{-N \leq h \leq 2N}$ of cardinality $\exp( O(\eps N))$ with the property that for every $n \in \Z$, there exists a sequence $(x_h)_{-N \leq h \leq 2N}$ in ${\mathcal S}_{\eps,N}$ such that
$$ f(n+h) = x_h + O(\eps)$$
for all $-N \leq h \leq 2N$.

Observe that if we can prove this claim for a given value of $N$, then we automatically obtain the claim for any larger $N' \geq N$ (with a slightly worse implicit constant), by covering the interval $[-N',2N']$ by $O(N'/N)$ translates of $[-N,2N]$.  In particular, it will suffice to verify the claim with $N = \lfloor H_{i_0} / 10 \rfloor$ for $i_0$ sufficiently large depending on $\eps$.

We may remove from consideration those $n$ for which $|n| \leq 2N$, since these cases can be accommodated simply by adding the sequences $(f(n+h))_{-N \leq h \leq 2N}$ for $|n| \leq 2N$ to ${\mathcal S}_{\eps,N}$, which only increases the cardinality of that family by a negligible amount.  If $n < -2N$ then one has $f(n+h)=0$ for all $-N \leq h \leq 2N$, and this case can be accommodated by adding the zero sequence $(0)_{-N \leq h \leq 2N}$ to ${\mathcal S}_{\eps,N}$.  Thus we may assume that $n > 2N$.

Recall that the function $f$ is only supported on the union of the intervals $\{m+1,\dots,m+H_i\}$ with $i \geq 1$ and $m \in S'_i$, so in particular $H_i \log H_i \leq m \leq X_i$.  Since $n > 2N$, such an interval can only intersect the interval $\{n-N,\dots,n+2N\}$ if one has
$$H_i \log H_i \ll n \ll X_i;$$
in particular there is at most one choice of $i$ in which this can occur.  Since $n \geq 2N = 2 \lfloor H_{i_0}/10\rfloor$, we conclude from \eqref{dead} that $i \geq i_0$, so in particular $H_i \geq 10 N$.  In particular, each interval $\{n-N, \dots, n+2N \}$ meets at most two of the intervals $\{m+1,\dots,m+H_i\}$.  It will now suffice to exhibit a set ${\mathcal S}'_{\eps,N}$ of finite sequences $(x_n)_{-N \leq h \leq 2N}$ of cardinality $O(\exp(O(\eps N)))$ with the property that for any $i \geq i_0$, any $m \in S'_i$, and sub-interval $\{n-N,\dots,n+2N\}$ of $\{m+1,\dots,m+H_i\}$, there exists a sequence $(x_n)_{-N \leq h \leq 2N}$ in ${\mathcal S}'_{\eps,N}$ for which
$$ f(n+h) = x_h + O(\eps)$$
for all $-N \leq h \leq 2N$. Indeed, one can now set ${\mathcal S}_{\eps,N}$ to be the collection of all sequences $(x_n)_{-N \leq h \leq 2N}$ formed by concatenating at most two subsequences of sequences in ${\mathcal S}'_{\eps,N}$, together with some blocks of zeroes; the cardinality of ${\mathcal S}_{\eps,N}$ is $O( N^{O(1)} | {\mathcal S}'_{\eps,N} |^2 )$, which will be at most $\exp(O(\eps N))$ if $N$ is large enough.

It remains to exhibit ${\mathcal S}'_{\eps,N}$.  If $n, m$ are as above, then
$$ f(n+h) = F( g_m^{n+h-m} x_0 ) $$
for $-N \leq h \leq 2N$.  In particular, there exists a polynomial sequence $g_n\colon \Z \to G$, that is to say a sequence of the form
$$ g_n(h) = g_{n,0} g_{n,1}^h g_{n,2}^{\binom{h}{2}} \dots g_{n,s}^{\binom{h}{s}}$$
where $g_{n,i} \in G_i$ for $i=0,\dots,s$, and $G = G_0 = G_1 \geq G_2 \geq \dots \geq G_s$ is the lower central series of $G$, such that 
$$f(n+h) = F( g_n(h) \Gamma )$$
for $-N \leq h \leq 2N$.  Currently we have $g_{n,0} = g_m^{n-m}$, $g_{n,1} = g_m$, and all other coefficients trivial; however we shall shortly consider more general polynomial sequences in which the higher coefficients $g_{n,2},\dots,g_{n,s}$ are allowed to be non-trivial.

The coefficients $g_{n,i}$ of an arbitrary polynomial sequence $g_n$ can be unbounded.  However, any such sequence $g_n$ may be factorised as
$$ g_n = \tilde g_n \gamma_n$$
where $\tilde g_n$ is a polynomial sequence with coefficients taking values in a compact set (depending only on $G,\Gamma$) and $\gamma_n$ is a polynomial sequence with coefficients in $\Gamma$; see \cite[Lemma C.1]{gtz} for a proof.  In particular, we have $g_n(h) \Gamma = \tilde g_n(h) \Gamma$ for any $h$, and hence
$$f(n+h) = F( \tilde g_{n,0} \tilde g_{n,1}^h \tilde g_{n,2}^{\binom{h}{2}} \dots \tilde g_{n,s}^{\binom{h}{s}} \Gamma )$$
for all $-N \leq h \leq 2N$ and some coefficients $\tilde g_{n,0},\dots,\tilde g_{n,s}$ in some fixed compact subset $K$ of $G$.

Let $A$ be a large constant depending on $G,\Gamma$ to be chosen later.  From many applications of the Baker-Campbell-Hausdorff formula (which is a polynomial formula in a connected, simply connected nilpotent Lie group such as $G$), we see that if we modify each of the coefficients $\tilde g_{n,0},\dots,\tilde g_{n,s}$ by at most $O(N^{-A})$ (after endowing $G$ with some smooth left-invariant Riemannian metric), then the quantities $\tilde g_{n,0} \tilde g_{n,1}^h \tilde g_{n,2}^{\binom{h}{2}} \dots \tilde g_{n,s}^{\binom{h}{s}}$ for $-N \leq h \leq 2N$ only change by $O( N^{-A+O(1)})$ in the $G$ metric.  In particular, if we select a maximal $N^{-A}$-separated net $\Sigma$ of $K$, and let $g'_{n,i}$ be $\tilde g_{n,i}$ rounded to the nearest element of $\Sigma$ (breaking ties arbitrarily), then from the Lipschitz nature of $F$ we have
$$f(n+h) = F( g'_{n,0} (g'_{n,1})^h (g'_{n,2})^{\binom{h}{2}} \dots (g'_{n,s})^{\binom{h}{s}} \Gamma ) + O(N^{-A+O(1)}).$$
If we choose $A$ large enough, then the error term $O(N^{-A+O(1)})$ is $O(\eps)$.  If we now set ${\mathcal S}'_{\eps,N}$ to be the collection of all sequences of the form
$$ (F( g_0 g_1^h g_2^{\binom{h}{2}} \dots g_s^{\binom{h}{s}} \Gamma ))_{-N \leq h \leq 2N}$$
with $g_0,\dots,g_s \in \Gamma$, then ${\mathcal S}'_{\eps,N}$ has cardinality $O( N^{O(A)}) = O( \exp( O( \eps N ) ) )$ for $N$ large enough, and the claim follows.

\begin{remark}\label{Entr}  The main fact that was used in the above argument is that the collection of nilsequences $n \mapsto F( g^n x_0 )$, where $F$ is a Lipschitz function on a nilmanifold $G/\Gamma$ of ``bounded complexity'', $g \in G$, and $x_0 \in G/\Gamma$, has ``uniform zero entropy'' in the sense that for any $\eps>0$ and any $N$ sufficiently large depending on $\eps$, the set of sequences formed from evaluating an arbitrary nilsequence in this collection at $N$ consecutive values has a metric entropy of $O( \exp(O(\eps N)))$ at scale $\eps>0$.  This is stronger than asserting that each individual nilsystem $(G/\Gamma, x \mapsto gx)$, $g \in G$ has zero entropy, as one needs to control the metric entropy of the set of sequences arising from \emph{arbitrary} shifts $g$, rather than just one shift at a time.  On the other hand, if all one is interested in is deducing Conjecture \ref{lgi-log} from Conjecture \ref{sar-log}, it is likely that one does not need the full strength of the inverse conjecture in \cite{gtz}, and in particular one does not need to introduce the notion of a nilmanifold or nilsequence at all.  Instead, one can rely on ``soft'' inverse theorems in which the role of nilsequences are replaced by those of dual functions (see e.g. \cite[\S 11.4]{tao-vu}), in which case the task is basically reduced to establishing that the collection of dual functions also has ``uniform zero entropy'' in a certain sense.  This in turn should be provable using some sort of random sampling argument to show that the dual function of a given function $f$ is almost completely controlled by the values of $f$ at some sparse random subset of the domain.  We will however not attempt to formalise these arguments here.
\end{remark}


\begin{thebibliography}{999}


\bibitem{borg-1}
J. Bourgain, \emph{M\"obius-Walsh correlation bounds and an estimate of Mauduit and Rivat}, J. Anal. Math.,
\textbf{119} (2013), 147--163.

\bibitem{borg-2}
J. Bourgain, \emph{On the correlation of the Moebius function with rank-one systems}, J. Anal. Math., \textbf{120} (2013), 105--130

\bibitem{bst}
J. Bourgain, P. Sarnak, and T. Ziegler, \emph{Disjointness of Moebius from horocycle flows}, in From Fourier
analysis and number theory to Radon transforms and geometry, volume 28 of Dev. Math., pages 67–83.
Springer, New York, 2013.

\bibitem{chowla}
S. Chowla, The Riemann hypothesis and Hilbert's tenth problem, Gordon and Breach, New York, 1965.

\bibitem{dart}
C. Dartyge, G. Tenenbaum, \emph{Sommes des chiffres de multiples d'entiers}, Ann. Inst. Fourier (Grenoble),
\textbf{55} (2005), 2423--2474.

\bibitem{davenport}
H. Davenport,  \emph{On some infinite series involving arithmetical functions (II)}, The Quarterly Journal of
Mathematics, \textbf{8} (1937), 313--320.

\bibitem{ddm}
J.-M. Deshoullier, M. Drmota, C. Muellner, \emph{Automatic Sequences generated by synchronizing automata
fulfill the Sarnak conjecture}, Studia Mathematica \textbf{231} (2015), 83--95.

\bibitem{dk}
T. Downarowicz, S. Kasjan, \emph{Odometers and Toeplitz subshifts revisited in the context of Sarnak's
conjecture}, preprint. {\tt arXiv:1502.02307}

\bibitem{drm}
M. Drmota, \emph{Subsequences of automatic sequences and uniform distribution}, In Uniform distribution and
quasi-Monte Carlo methods, volume 15 of Radon Ser. Comput. Appl. Math., pages 87--104. De Gruyter,
Berlin, 2014.

\bibitem{eis}
T. Eisner, \emph{A polynomial version of Sarnak's conjecture}, C. R. Math. Acad. Sci. Paris \textbf{353} (2015), no. 7, 569--572.

\bibitem{elliott}
P. D. T. A. Elliott, \emph{On the correlation of multiplicative functions}, {Notas Soc. Mat. Chile}, {Notas de la Sociedad de Matem\'atica de Chile}, \textbf{11} (1992), 1--11.

\bibitem{ekl}
E. H. El Abdalaoui, S. Kasjan, M. Lema\'nczyk, \emph{0-1 sequences of the Thue-Morse type and Sarnak's conjecture}, Proc. Amer. Math. Soc.,  \textbf{144} (2016), 161--176.

\bibitem{ekld}
E. El Abdalaoui, J. Kulaga-Przymus, M. Lemanczyk, T. De La Rue, \emph{The Chowla and the Sarnak conjectures from ergodic theory point of view}, preprint. {\tt arXiv:1410.1673}

\bibitem{elr}
E. H. El Abdalaoui, M. Lema\'nczyk, T. de la Rue, \emph{On spectral disjointness of powers for rank-one
transformations and M\"obius orthogonality}, J. Funct. Anal., \textbf{266} (2014), 284--317.

\bibitem{elr-2}
E. H. El Abdalaoui, M. Lema\'nczyk, T. de la Rue, \emph{Automorphisms with quasi-discrete spectrum, multiplicative functions and average orthogonality along short intervals}, preprint. {\tt arXiv:1507.04132}

\bibitem{ey}
E. El Abdalaoui, X. Ye, \emph{A cubic nonconventional ergodic average with M\"obius and Liouville weight}, preprint. {\tt arXiv:1504.00950}


\bibitem{fklm}
S. Ferenczi, J. Ku{\l}aga-Przymus, M. Lemanczyk, C. Mauduit, \emph{Substitutions and M\"obius disjointness},
preprint, {\tt arXiv:1507.01123}

\bibitem{fm} 
S. Ferenczi, C. Mauduit, \emph{On Sarnak's conjecture and Veech's question for interval exchanges}, preprint.

\bibitem{fgkt}
K. Ford, B. Green, S. Konyagin, T. Tao, \emph{Large gaps between consecutive prime numbers}, preprint. {\tt arXiv:1408.4505}

\bibitem{fhost}
N. Frantzikinatkis, B. Host, \emph{Higher order Fourier analysis of multiplicative functions and applications}, preprint. {\tt 1403.0945}.

\bibitem{FH}
N. Frantzikinakis, B. Host, \emph{Asymptotics for multilinear averages of multiplicative functions}, preprint. {\tt arXiv:1502.02646}.

\bibitem{fhk}
N. Frantzikinakis, B. Host, B. Kra, \emph{Multiple recurrence and convergence for sequences related to the prime numbers}, J. Reine Angew. Math. \textbf{611} (2007), 131--144. 

%
%


\bibitem{gowers-4}
W. T. Gowers, \emph{A new proof of Szemer\'edi's theorem for arithmetic progressions of length four}, Geom. Funct. Anal. \textbf{8} (1998), no. 3, 529--551.

\bibitem{gowers}
W. T. Gowers, \emph{A new proof of Szemer\'edi's theorem}, Geom. Funct. Anal. \textbf{11} (2001), no. 3, 465--588.

\bibitem{green}
B. Green, \emph{On (not) computing the M\"obius function using bounded depth circuits}, Combin. Probab.
Comput., \textbf{21} (2012), 942--951.

\bibitem{gt-longaps}
B. Green, T. Tao, \emph{The primes contain arbitrarily long arithmetic progressions}, Ann. of Math. (2) \textbf{167} (2008), no. 2, 481--547. 

\bibitem{gt-selberg}
B. Green, T. Tao, \emph{Restriction theory of the Selberg sieve, with applications}, J. Th\'eor. Nombres Bordeaux \textbf{18} (2006), no. 1, 147--182.

\bibitem{gt-primes}
B. Green, T. Tao, \emph{Linear equations in primes}, Ann. of Math. (2) \textbf{171} (2010), no. 3, 1753--1850.

\bibitem{gt-mobius}
B. Green, T. Tao, \emph{The M\"obius function is strongly orthogonal to nilsequences}, Ann. of Math. (2) \textbf{175} (2012), no. 2, 541--566. 

\bibitem{gt-inverseu3}
B. Green, T. Tao, \emph{An inverse theorem for the Gowers $U^3(G)$ norm}, Proc. Edinb. Math. Soc. \textbf{51} (2008), 73--153.

\bibitem{gtz-4}
B. Green, T. Tao, T. Ziegler, \emph{An inverse theorem for the Gowers $U^4$-norm}, Glasg. Math. J. \textbf{53} (2011), no. 1, 1--50.

\bibitem{gtz}
B. Green, T. Tao, T. Ziegler, \emph{An inverse theorem for the Gowers $U^{s+1}[N]$-norm}, Ann. of Math. (2) \textbf{176} (2012), no. 2, 1231--1372. 

\bibitem{hanna}
G. Hanna, \emph{Sur les occurrences des mots dans les nombres premiers}, preprint. {\tt arXiv:1511.02068}

\bibitem{hpw}
G. Harman, J. Pintz, D. Wolke, \emph{A note on the M\"obius and Liouville functions}, 
Studia Sci. Math. Hungar. \textbf{20} (1985), no. 1-4, 295--299. 

\bibitem{hil}
A. Hildebrand, \emph{On consecutive values of the Liouville function}, Enseign. Math. (2) \textbf{32} (1986), no. 3-4, 219--226.

\bibitem{hoeff}
W. Hoeffding, \emph{Probability inequalities for sums of bounded random variables}, J. Amer. Stat. Assoc. \textbf{58} (1963), 13--30.

\bibitem{huang}
W. Huang, Z. Lian, S. Shao, X. Ye, \emph
{Sequences from zero entropy noncommutative toral automorphisms and Sarnak Conjecture}, preprint. {\tt arXiv:1510.06022}


\bibitem{ik}
H. Iwaniec, E. Kowalski, Analytic number theory.  American Mathematical Society Colloquium Publications, 53. American Mathematical Society, Providence, RI, 2004.

\bibitem{kara}
D. Karagulyan, \emph{On M\"obius orthogonality for interval maps of zero entropy
and orientation-preserving circle homeomorphisms}, Ark. Mat., 2015,
dx.doi.org/10.1007/s11512-014-0208-5.


\bibitem{kl}
J. Ku{\l}aga-Przymus, M. Lema\'nczyk \emph{The Moebius function and continuous extensions of rotations}, Monatsh. Math. \textbf{178} (2015), no. 4, 553--582.

\bibitem{ls}
J. Liu, P. Sarnak, \emph{The M\"obius function and distal flows}, Duke Math. J., \textbf{164} (2015), 1353--1399.

\bibitem{mriv}
C. Mauduit, J. Rivat, \emph{Sur un probl\`eme de Gelfond: la somme des chiffres des nombres premiers},
Ann. of Math. \textbf{171} (2010), 1591–1646.

\bibitem{mriv-2}
C. Mauduit and J. Rivat, \emph{Prime numbers along Rudin-Shapiro sequences}, J. Eur. Math. Soc. (JEMS) \textbf{17} (2015), no. 10, 2595--2642.

\bibitem{mr}
K. Matom\"aki, M. Radziwi{\l}{\l}, \emph{Multiplicative functions in short intervals}, preprint. {\tt arXiv:1501.04585}.

\bibitem{mrn}
K. Matom\"aki, M. Radziwi{\l}{\l}, \emph{A note on the Liouville function in short intervals}, preprint. {\tt arXiv:1502.02374}

\bibitem{mrt}
K. Matom\"aki, M. Radziwi{\l}{\l}, T. Tao, \emph{An averaged form of Chowla's conjecture}, Algebra Number Theory \textbf{9} (2015), 2167--2196

\bibitem{mrt-2}
K. Matom\"aki, M. Radziwi{\l}{\l}, T. Tao, \emph{Sign patterns of the M\"obius and Liouville functions}, preprint. {\tt arXiv:1509.01545}.
 


\bibitem{mullner}
C. M\"ullner, \emph{Automatic sequences fulfill the full Sarnak conjecture}, preprint. {\tt arXiv:1602.03042}

\bibitem{peck}
R. Peckner, \emph{M\"obius disjointness for homogeneous dynamics}, preprint. {\tt arXiv:1506.07778}.


\bibitem{sarnak}
P. Sarnak, Three lectures on the M¨obius Function randomness and dynamics, 2010. {\tt publications.ias.edu/sarnak/paper/506}.

\bibitem{sarnak-2}
P. Sarnak, \emph{Mobius randomness and dynamics}, Not. S. Afr. Math. Soc. \textbf{43} (2012), No. 2, 89--97.

\bibitem{sarnak-ubis}
P. Sarnak, A. Ubis, \emph{The horocycle flow at prime times}, J. Math. Pures Appl. \textbf{103} (2015),
575--618.


\bibitem{tao-sarnak}
T. Tao, \emph{The Chowla conjecture and the Sarnak conjecture}, {\tt terrytao.wordpress.com/2012/10/14}

\bibitem{tao-chowla}
T. Tao, \emph{The logarithmically averaged Chowla and Elliott conjectures for two-point correlations}, preprint. {\tt arXiv:1509.05422}

\bibitem{tao-erd}
T. Tao, \emph{The Erd\H{o}s discrepancy problem}, Discrete Analysis 2016:1, 26 pp.

\bibitem{tao-vu}
T. Tao, V. Vu, Additive Combinatorics, Cambridge University Press, 2006.

\bibitem{ten}
G. Tenenbaum, Introduction to analytic and probabilistic number theory, Translated from the second French edition (1995) by C. B. Thomas. Cambridge Studies in Advanced Mathematics, 46. Cambridge University Press, Cambridge, 1995. 

\bibitem{veech}
W. A. Veech, \emph{M\"obius orthogonality for generalized Morse-Kakutani flows}, preprint.

\bibitem{wz}
T. Wooley, T. Ziegler, \emph{Multiple recurrence and convergence along the primes}, Amer. J. Math. \textbf{134} (2012), no. 6, 1705--1732. 

\bibitem{zhan}
T. Zhan, \emph{On the representation of large odd integer as a sum of three almost equal primes}, Acta Math. Sinica (N.S.) \textbf{7} (1991), no. 3, 259--272.

\end{thebibliography}
\end{document}